\title{Bounds on the volume of an inclusion in a body from a complex conductivity measurement}
\author{Andrew E. Thaler and Graeme W. Milton \\ \small{Department of Mathematics, University of Utah, Salt Lake City, UT 84112, USA}}
\date{\empty} 
\numberwithin{equation}{section}
\newcommand{\myspace}{\\[0.4cm]}
\newcommand{\ii}{\mathrm{i}}
\newcommand{\ee}{\mathrm{e}}
\newcommand{\xhat}{\widehat{\mathbf{x}}}
\newcommand{\yhat}{\widehat{\mathbf{y}}}
\newcommand{\x}{\mathbf{x}}
\newcommand{\lla}{\left\langle}
\newcommand{\rra}{\right\rangle}
\newcommand{\E}{\mathbf{E}}
\newcommand{\J}{\mathbf{J}}
\newcommand{\dets}{\sigma_1^{(1)}\sigma_2^{(2)} - \sigma_2^{(1)}\sigma_1^{(2)}}
\newcommand{\Rp}{R_{\perp}}
\newcommand{\ga}{\mathbf{g}^{(\alpha)}}
\newcommand{\ha}{\mathbf{h}^{(\alpha)}}
\newcommand{\ca}{\chi^{(\alpha)}}
\newcommand{\fa}{f^{(\alpha)}}
\newcommand{\Sa}{S^{(\alpha)}}
\newcommand{\Ma}{M^{(\alpha)}}
\newcommand{\Ta}{T^{(\alpha)}}
\newcommand{\Ba}{B^{(\alpha)}}
\newcommand{\pa}{p^{(\alpha)}}
\newcommand{\tpa}{\widetilde{p}^{(\alpha)}}
\newcommand{\al}{a^{(\alpha)}}
\newcommand{\aone}{a^{(1)}}
\newcommand{\atwo}{a^{(2)}}
\newcommand{\Eoo}{\E_1^{(1)}}
\newcommand{\Eot}{\E_1^{(2)}}
\newcommand{\Eto}{\E_2^{(1)}}
\newcommand{\Ett}{\E_2^{(2)}}
\newcommand{\A}{\mathcal{A}}
\newcommand{\F}{\mathcal{F}}
\newcommand{\Fa}{\mathcal{F}^{(\alpha)}}
\newcommand{\Fo}{\mathcal{F}^{(1)}}
\newcommand{\Ft}{\mathcal{F}^{(2)}}
\newcommand{\Ea}{\mathcal{E}^{(\alpha)}}
\newcommand{\Ei}{\mathcal{E}^{(1)}_f\cap\mathcal{E}^{(2)}_f}
\newcommand{\cE}{\mathcal{E}}
\newcommand{\tF}{\widetilde{\mathcal{F}}}
\newcommand{\tFa}{\widetilde{\mathcal{F}}^{(\alpha)}}
\newcommand{\tFo}{\widetilde{\mathcal{F}}^{(1)}}
\newcommand{\tFt}{\widetilde{\mathcal{F}}^{(2)}}
\newcommand{\tA}{\widetilde{\mathcal{A}}}
\newcommand{\tEa}{\widetilde{\mathcal{E}}^{(\alpha)}}
\newcommand{\tEo}{\widetilde{\mathcal{E}}^{(1)}}
\newcommand{\tEt}{\widetilde{\mathcal{E}}^{(2)}}
\newcommand{\tcE}{\widetilde{\mathcal{E}}}
\newcommand{\xione}{\xi^{(1)}}
\newcommand{\xitwo}{\xi^{(2)}}
\newcommand{\etaone}{\eta^{(1)}}
\newcommand{\etatwo}{\eta^{(2)}}
\newcommand{\fone}{f^{(1)}}
\newcommand{\ftwo}{f^{(2)}}
\newcommand{\chione}{\chi^{(1)}}
\newcommand{\chitwo}{\chi^{(2)}}
\newcommand{\psione}{\psi^{(1)}}
\newcommand{\psitwo}{\psi^{(2)}}
\newcommand{\avgnormvplusminusa}{\langle\|\mathbf{v}_{\pm}^{(\alpha)}\|^2\rangle}
\newcommand{\avgnormvplusa}{\langle\|\mathbf{v}_{+}^{(\alpha)}\|^2\rangle}
\newcommand{\avgnormvminusa}{\langle\|\mathbf{v}_{-}^{(\alpha)}\|^2\rangle}
\newcommand{\avgnormvplusone}{\langle\|\mathbf{v}_{+}^{(1)}\|^2\rangle}
\newcommand{\avgnormvminusone}{\langle\|\mathbf{v}_{-}^{(1)}\|^2\rangle}
\newcommand{\avgnormvplustwo}{\langle\|\mathbf{v}_{+}^{(2)}\|^2\rangle}
\newcommand{\avgnormvminustwo}{\langle\|\mathbf{v}_{-}^{(2)}\|^2\rangle}
\newcommand{\normavgvplusminusa}{\|\langle\mathbf{v}_{\pm}^{(\alpha)}\rangle\|^2}
\newcommand{\normavgvplusa}{\|\langle\mathbf{v}_{+}^{(\alpha)}\rangle\|^2}
\newcommand{\normavgvminusa}{\|\langle\mathbf{v}_{-}^{(\alpha)}\rangle\|^2}
\newcommand{\normavgvplusone}{\|\langle\mathbf{v}_{+}^{(1)}\rangle\|^2}
\newcommand{\normavgvminusone}{\|\langle\mathbf{v}_{-}^{(1)}\rangle\|^2}
\newcommand{\normavgvplustwo}{\|\langle\mathbf{v}_{+}^{(2)}\rangle\|^2}
\newcommand{\normavgvminustwo}{\|\langle\mathbf{v}_{-}^{(2)}\rangle\|^2}
\newcommand{\vplusminusa}{\mathbf{v}_{\pm}^{(\alpha)}}
\newcommand{\vplusa}{\mathbf{v}_{+}^{(\alpha)}}
\newcommand{\vminusa}{\mathbf{v}_{-}^{(\alpha)}}
\newcommand{\vplusminusone}{\mathbf{v}_{\pm}^{(1)}}
\newcommand{\vplusone}{\mathbf{v}_{+}^{(1)}}
\newcommand{\vminusone}{\mathbf{v}_{-}^{(1)}}
\newcommand{\vplusminustwo}{\mathbf{v}_{\pm}^{(2)}}
\newcommand{\vplustwo}{\mathbf{v}_{+}^{(2)}}
\newcommand{\vminustwo}{\mathbf{v}_{-}^{(2)}}
\DeclareMathOperator{\tr}{tr}
\DeclareMathOperator{\Arg}{Arg}
\theoremstyle{definition}\newtheorem{feasibleregion}{Definition}[section]
\theoremstyle{definition}\newtheorem{elementaryfeasibleregion}[feasibleregion]{Definition}
\theoremstyle{definition}\newtheorem{ellipseregion}[feasibleregion]{Definiton}
\theoremstyle{remark}\newtheorem*{betaremark}{Remark}
\theoremstyle{remark}\newtheorem*{betaremark2}{Remark}
\theoremstyle{theorem}\newtheorem{elementary_bounds_theorem}{Theorem}[section]
\theoremstyle{theorem}\newtheorem{ellipseslemma}{Lemma}[section]
\theoremstyle{theorem}\newtheorem{boundingbox}[ellipseslemma]{Lemma}
\theoremstyle{remark}\newtheorem*{boundingboxremark}{Remark}
\theoremstyle{theorem}\newtheorem{twointersectionpoints}[ellipseslemma]{Lemma}
\theoremstyle{theorem}\newtheorem{noRperptheorem}{Theorem}[section]
\theoremstyle{definition}\newtheorem{feasibleregion2}{Definition}[section]
\theoremstyle{definition}\newtheorem{ellipseregion2}[feasibleregion2]{Definition}
\theoremstyle{definition}\newtheorem{restricted_elementary_admissible}[feasibleregion2]{Definition}
\theoremstyle{theorem}\newtheorem{eigenvaluelemma}{Lemma}[section]
\theoremstyle{theorem}\newtheorem{improved_elementary_bounds_theorem}{Theorem}[section]
\theoremstyle{theorem}\newtheorem{ellipseslemma2}[eigenvaluelemma]{Lemma}
\theoremstyle{theorem}\newtheorem{boundingbox2}[eigenvaluelemma]{Lemma}
\theoremstyle{theorem}\newtheorem{twointersectionpoints2}[eigenvaluelemma]{Lemma}
\theoremstyle{theorem}\newtheorem{Rperptheorem}[improved_elementary_bounds_theorem]{Theorem}
\begin{document}

\maketitle

\begin{abstract}
  We derive bounds on the volume of an inclusion in a body in two or three dimensions when the conductivities of the inclusion and the surrounding body are complex and assumed to be known.  The bounds are derived in terms of average values of the electric field, current, and certain products of the electric field and current.  All of these average values are computed from a single electrical impedance tomography measurement of the voltage and current on the boundary of the body.  Additionally, the bounds are tight in the sense that at least one of the bounds gives the exact volume of the inclusion for certain geometries and boundary conditions.
\end{abstract}

\section{Introduction}

Electrical impedance tomography (EIT) is a non-invasive imaging technique in which one utilizes measurements of the voltage and current at the boundary of a body $\Omega$ to determine information about the electrical properties inside $\Omega$.  EIT has applications in the non-destructive testing of materials, geophysical prospection, and medical imaging--see \cite{Borcea:2002:EIT, Cheney:1999:EIT} and references therein.  In the context of medical imaging, EIT can be used for breast cancer detection \cite{Cheney:1999:EIT} and the screening of organs for degradation prior to transplantation surgery \cite{Beretta:2011:SEE, Griffiths:1995:TSE}.  In these applications the complex conductivities of the healthy and cancerous/degraded tissues differ, so information about the conductivity distribution would allow one to estimate the location and/or size of the cancerous/degraded tissue.  For many other medical applications see \cite{Hamilton:2013:DEI} and the references therein.

Our goal in this paper is to find bounds on the volume fraction occupied by an inclusion $D$ inside a body $\Omega$.  In the context of organ screening, for example, $D$ could represent the degraded tissue and $\Omega \setminus D$ could represent the healthy tissue; it would be useful to estimate the volume of degraded tissue (the volume of $D$) before the organ is transplanted \cite{Beretta:2011:SEE, Griffiths:1995:TSE}.  We will assume that the complex conductivity inside $\Omega$ is of the form
\begin{equation*}
  \sigma = \sigma^{(1)} \chi(D) + \sigma^{(2)} \chi(\Omega \setminus D)
\end{equation*}
where $\sigma^{(\alpha)} = \sigma_1^{(\alpha)} + \ii \sigma_2^{(\alpha)}$ for $\alpha = 1, 2$ and $\chi(D)$ is the indicator function of $D$.   We require $\sigma_1^{(\alpha)} > 0$ for $\alpha = 1, 2$, which corresponds to energy dissipation \cite{Borcea:2002:EIT}.  More generally, we will follow \cite{Kang:2012:SBV} and consider a two-phase material with conductivity 
\begin{equation*}
  \sigma(\x) = \sigma^{(1)} \chione(\x) + \sigma^{(2)} \chitwo(\x)
\end{equation*}
where $\sigma^{(1)}$ and $\sigma^{(2)}$ are as before and $\chione$ is the characteristic function of phase 1, namely
\begin{equation*}
  \chione(\x) = 1-\chitwo(\x) = \begin{cases} 1 &\text{if } \x \in \text{phase } 1\\
  							0 &\text{if } \x \in \text{phase } 2.
			  \end{cases}
\end{equation*}
We will also assume that each phase is homogeneous and isotropic, so $\sigma^{(1)}$ and $\sigma^{(2)}$ are constant complex scalars (as discussed in \cite{Beretta:2011:SEE}, this is a reasonable assumption in the contexts of breast cancer detection and organ screening).

EIT operates in the quasistatic regime, where the wavelengths of all relevant electric and magnetic fields are much larger than $\Omega$.  In EIT, one typically prescribes either the voltage or current on $\partial \Omega$.  Under these conditions the voltage $V$ satisfies
\begin{equation}\label{conductivity_equation}
  \nabla \cdot \left(\sigma \nabla V\right) = 0 \text{ in } \Omega
\end{equation}
subject to either the Dirichlet boundary condition 
\begin{equation}\label{Dirichletbc}
  V = V_0 \text{ on } \partial \Omega
\end{equation}
or the Neumann boundary condition
\begin{equation}\label{Neumannbc}
  \left\{\begin{aligned} \sigma\frac{\partial V}{\partial n} &= I_0 \text{ on } \partial\Omega \\
  					   \int_{\partial\Omega} I_0 &= \int_{\partial \Omega} V = 0, \end{aligned}\right.
\end{equation}
where $\mathbf{n}$ is the outward unit normal to $\partial \Omega$ and $\frac{\partial V}{\partial n} = \nabla V \cdot \mathbf{n}$--see \cite{Borcea:2002:EIT}.  The PDE \eqref{conductivity_equation} can be equivalently written in the form
\begin{equation}\label{quasistatic_equations}
  \E = -\nabla V, \quad \nabla \cdot \J = 0, \quad \text{and} \quad \J = \sigma\E,
\end{equation}
where $\E$ is the electric field and $\J$ is the current density--see \cite{Borcea:2002:EIT}.  For a derivation of \eqref{conductivity_equation}, \eqref{quasistatic_equations} and the boundary conditions \eqref{Dirichletbc}, \eqref{Neumannbc} see \cite{Cheney:1999:EIT, Hamilton:2012:DDB}.

Our data will be the measurements $\left(V_0,\left.\sigma\frac{\partial V}{\partial n}\right|_{\partial \Omega}\right)$ when the Dirichlet boundary condition \eqref{Dirichletbc} is prescribed or $\left(I_0,\left.V\right|_{\partial \Omega}\right)$ when the Neumann boundary condition \eqref{Neumannbc} is prescribed.  (The measurements $\left.\sigma\frac{\partial V}{\partial n}\right|_{\partial \Omega}$ and $\left.V\right|_{\partial \Omega}$ are known as the Dirichlet-to-Neumann and Neumann-to-Dirichlet maps, respectively--see \cite{Borcea:2002:EIT} and the references therein for a more complete description and properties of these maps.  Also note that we are assuming that we know the voltage and current around the entire boundary $\partial \Omega$--see \cite{Borcea:2002:EIT, Hamilton:2013:NIP}).  Our goal is to use a single measurement of the voltage and current on $\partial \Omega$ to derive lower and upper bounds on the volume fraction of phase 1, namely $\fone = \langle\chione\rangle$, where
\begin{equation}\label{average_definition}
  \langle\mathbf{u}\rangle = \frac{1}{|\Omega|}\int_{\Omega} \mathbf{u}
\end{equation}
denotes the average of a vector-valed (or scalar) function $\mathbf{u}$ over $\Omega$ and $|\Omega|$ denotes the Lebesgue measure of $\Omega$.  

Several methods for deriving these bounds have been explored in the literature.  In the real conductivity case,  Alessandrini, Rosset, and Seo \cite{Alessandrini:2000:OSE}, Alessandrini and Rosset \cite{Alessandrini:1998:ICP}, Ikehata \cite{Ikehata:1998:SEI}, and Kang, Seo, and Sheen \cite{Kang:1997:ICP} utilized a single boundary measurement and methods from elliptic PDE to bound the volume of an inclusion $D$ in $\Omega$. In \cite{Alessandrini:1998:ICP, Alessandrini:2000:OSE} the authors made the technical assumption that 
\begin{equation}\label{assumption_on_D}
  d(D,\partial \Omega) \ge d_0 > 0
\end{equation}
where $d(D,\partial \Omega)$ is the distance between $D$ and $\partial\Omega$.
The bounds they derived involve constants that are not easy to determine.  Beretta, Francini, and Vessella \cite{Beretta:2011:SEE} used similar methods to derive bounds in the complex conductivity case--however they were able to remove the assumption \eqref{assumption_on_D} with certain restrictions on $\sigma^{(1)}$ and $\sigma^{(2)}$, which, as pointed out in their paper, is important in the application to organ screening as some of the degraded tissue may be present on the surface of the organ.  Their bounds also involve constants that in general may be difficult to determine, although they can be evaluated in some cases when special boundary conditions are imposed (see in particular Proposition 3.3 in their paper).

Capdeboscq and Vogelius \cite{Capdeboscq:2003:OAE} utilized multiple boundary measurements and the Lipton bounds on polarization tensors \cite{Lipton:1993:IEE} in the real conductivity case to find optimal asymptotic estimates on the volume of inclusions as the volume of the inclusions tends to 0.

If the body $\Omega$ contains a statistically homogeneous or periodic composite, then bounds on the effective tensors of this composite can be used in an inverse fashion to bound the volume fraction--see \cite{Cherkaeva:1998:IBM, McPhedran:1982:ESI, McPhedran:1990:ITP, Phan-Thien:1982:PUB}.  Similarly the universal bounds of Nemat-Nasser and Hori \cite{Nemat-Nasser:1993:MOP} on the response of a body $\Omega$ containing two phases in any configuration can be easily inverted to bound the volume fraction \cite{Milton:2012:UBE}.  Moreover Milton \cite{Milton:2012:UBE} used measurements of the voltage and current on $\partial \Omega$ with special boundary conditions to determine properties of the effective tensor of a composite containing rescaled copies of $\Omega$ packed to fill all space.  Bounds on this effective tensor led to universal bounds on the response of the body when the special boundary conditions are applied; these bounds were then inverted to bound the volume fraction.  We note that all of the bounds described in this paragraph can be computed in terms of known data (e.g. measurements of effective moduli or boundary measurements of the voltage and current).  

In the real conductivity case, variational methods have also been used to bound the volume fraction.  Several variational formulations of the PDE \eqref{conductivity_equation} were derived by Cherkaev and Gibiansky in \cite{Cherkaev:1994:VPC}.  Berryman and Kohn \cite{Berryman:1990:VCE} were the first to use variational methods in the context of EIT to determine information about the conductivity in a body.  Kang, Kim, and Milton \cite{Kang:2012:SBV} used the translation method introduced by Murat and Tartar \cite{Murat:1985:CVH, Tartar:1979:ECH, Tartar:1985:EFC} and independently by Lurie and Cherkaev \cite{Lurie:1982:AEC, Lurie:1984:EEC} (see also \cite{Milton:2002:TOC}) to derive sharp bounds on the volume fraction using 2 boundary measurements of the voltage and current in 2 dimensions.  The bounds are easily computed in terms of these measurements.  Kang, Kim, and Milton \cite{Kang:2012:SBV} also found geometries in which one of the bounds gives the true volume fraction.  Kang and Milton applied the translation method in 3 dimensions to find bounds on the volume fraction in \cite{Kang:2013:BVF3d}; these bounds can be computed using 3 boundary measurements.

Rather than derive variational principles, we will use the fact that certain variations are non-negative--see \eqref{g} and the paragraph following it, for example.  In \cite{Matheron:1993:QIP} Matheron used this idea to re-derive the famous Hashin-Shtrikman bounds \cite{Hashin:1962:VAT} on the effective conductivity of an isotropic composite--also see Section 16.5 of \cite{Milton:2002:TOC}.  We will also apply the ``splitting method'', introduced in in the context of elasticity in \cite{Milton:2012:BVF}, in which one derives bounds by splitting $\Omega$ into its constituent phases and correlating information about the fact that variations in each phase are non-negative and averages of certain quantities (null Lagrangians) are known.  Using this technique, in Theorems \ref{elementary_bounds_theorem} and \ref{improved_elementary_bounds_theorem} we establish some elementary bounds that can be computed from the single voltage and current measurement on $\partial \Omega$.  

In Theorems \ref{noRperptheorem} and \ref{Rperptheorem} we derive a method for numerically computing ``better'' bounds--we say ``better'' because these bounds may or may not be tighter than the above mentioned elementary bounds--see Section \ref{section_numerical_example}.  The method can be described as follows.  Let $f \in \A_e \subseteq (0,1)$, where $\A_e$ is an interval determined by the elementary bounds.  We call $f$ a \emph{test value}.  The splitting method implies that $f$ could potentially be the volume fraction of phase 1 if and only if certain $2\times 2$ matrices $S_f^{(1)}(x,y)$ and $S_f^{(2)}(x,y)$ (one for each phase) are simultaneously positive-semidefinite at some point in $(x,y) \in \mathbb{R}^2$.  This, in turn, is equivalent to requiring that two elliptic disks in the $(x,y)$ plane have a nonempty intersection.  (By elliptic disk we mean an ellipse in the plane union its interior).  In other words, if the elliptic disks do intersect, $f$ could be the true volume fraction; if the elliptic disks do not intersect, $f$ cannot be the true volume fraction.  This allows us to eliminate those values of $f \in \A_e$ for which the elliptic disks do not intersect, leaving us with a set $\A \subseteq \A_e$ of \emph{admissible} values.  Any $f\in\A$ could be the true volume fraction of phase 1, so bounds on $\A$ give us bounds on $f^{(1)}$.  Unfortunately these bounds must be computed numerically, but we emphasize that their computation is elementary and involves finding the interval (or intervals) of values where a certain function is positive and only requires a single measurement of the voltage and current on $\partial \Omega$.  

Finally, since we use the fact that variations are nonnegative rather than PDE methods or variational principles, we can easily determine attainability conditions for the bounds, i.e. conditions on the electric field that guarantee that the lower or upper elementary bound is exactly equal to the true volume fraction.  Our method also enables us to remove the assumption \eqref{assumption_on_D}; in fact, as long as the PDE \eqref{conductivity_equation} subject to the boundary conditions \eqref{Dirichletbc} or \eqref{Neumannbc} has a unique (weak) solution, our method can be applied.  Some of the bounds we obtain could presumably be obtained using the translation method, but the application of this method when we take into account all the null Lagrangians is less transparent since we would need to introduce a Lagrange multiplier for each of the many constraints.

The remainder of this paper is organized as follows.  In Section \ref{section_preliminaries} we introduce our notation and assumptions.  In Section \ref{section_the_splitting_method}, we apply the splitting method to several null Lagrangians, which are functionals of the electric field and current density that can be expressed in terms of the boundary voltage and current data.  In Section \ref{section_elementary_bounds} we derive the elementary bounds.  We derive a geometrical method for computing ``better'' bounds in Section \ref{section_more_sophisticated_bounds}.  Our work in Sections \ref{section_preliminaries}-\ref{section_more_sophisticated_bounds} applies in 2 or 3 dimensions.  In Section \ref{section_additional_null_Lagrangians} we use 2 additional null Lagrangians to derive even better bounds in the 2-D case, and in Section \ref{section_numerical_example} we apply our method to a test problem. 


\section{Preliminaries}\label{section_preliminaries}
As discussed in the introduction, we consider a two-phase mixture and also the case of an inclusion in a body.  
The region of interest (the unit cell of periodicity in the former case and the union of the inclusion and the body in the latter case) will be denoted by $\Omega$.  We assume that the conductivity in each phase is homogeneous and isotropic; then for $\x \in \Omega$ we have
\[\sigma(\x) = \sigma^{(1)} \chione(\x) + \sigma^{(2)} \chitwo(\x) \, ,\]
where $\sigma^{(\alpha)} = \sigma_1^{(\alpha)} + \ii \sigma_2^{(\alpha)}$ for $\alpha = 1,2$ are complex constants that we assume are known, $\sigma_1^{(\alpha)} > 0$ (as required physically), $0 < \left|\sigma^{(\alpha)}\right| < \infty$, and $\sigma^{(1)}\ne\sigma^{(2)}$.  We will see later that we must also assume 
\[\beta := \sigma_1^{(1)}\sigma_2^{(2)} - \sigma_2^{(1)}\sigma_1^{(2)} \ne 0,\]
so that $\Arg\sigma^{(1)} \ne \Arg\sigma^{(2)}$.  This implies that our results do not directly extend to the case when both phases have real conductivities.  

The average value of an integrable vector field (or scalar function) $\mathbf{u}$ is defined in \eqref{average_definition}.
The volume fraction of phase $\alpha$ is denoted by $\fa$, so
\[\fone = \langle\chione\rangle \quad \text{and} \quad \ftwo = 1-\fone = \langle\chitwo\rangle \, .\]
The electric potential, electric field, and current density will be denoted by $V = V_1 + \ii V_2, \E = \E_1 + \ii \E_2,$ and $\J = \J_1 + \ii \J_2$, respectively (so for $m = 1,2$, $V_m, \E_m,$ and $\J_m$ are real).  Recall that $V$ satisfies \eqref{conductivity_equation} subject to either \eqref{Dirichletbc} or \eqref{Neumannbc}, $\E = -\nabla V$, and $\J = \sigma \E$.   

Let $\mathbf{u} = \mathbf{u}_1 + \ii\mathbf{u}_2$ be a complex-valued vector field in $\mathbb{C}^2$ or $\mathbb{C}^3$.  Then we set $\mathbf{u}^{(\alpha)}(\x) := \ca(\x)\mathbf{u}(\x)$ and $\mathbf{u}_m^{(\alpha)}(\x) := \ca(\x)\mathbf{u}_m(\x)$ for $\alpha, m = 1, 2$.  The symbol ``$\cdot$'' will denote the usual Euclidean dot product on $\mathbb{R}^2$ or $\mathbb{R}^3$, while the Euclidean norm of a real-valued vector field $\mathbf{q}(\x) \in \mathbb{R}^2$ or $\mathbb{R}^3$ will be denoted by $\left\|\mathbf{q}(\x)\right\| = \sqrt{\mathbf{q}(\x)\cdot\mathbf{q}(\x)}$.  For any complex number $z = z_1 + \mathrm{i}z_2$ the modulus of $z$ will be denoted by $|z| = \sqrt{z_1^2 + z_2^2}$.


\section{The Splitting Method}\label{section_the_splitting_method}

\subsection{Null Lagrangians}
  We assume that we have full knowledge of a single applied boundary voltage $V_0$ and corresponding current $\sigma\frac{\partial V}{\partial n}|_{\partial \Omega}$ on $\partial \Omega$ (in the case of the Dirichlet problem--in the case of the Neumann problem, we assume that we have complete knowledge of the single applied current $I_0$ and corresponding voltage $V|_{\partial \Omega}$ on $\partial \Omega$--see the Introduction). In order to derive bounds on the volume fraction $\fone$ (hence $f^{(2)} = 1-f^{(1)}$) using this data, we make use of certain null Lagrangians, which are  functionals that can be expressed in terms of boundary data.  For $k, l = 1, 2$ we use integration by parts to find
\begin{equation}\label{nullLagrangians}
  \langle\E_k\rangle = -\dfrac{1}{\left|\Omega\right|} \displaystyle\int_{\partial \Omega} V_k \mathbf{n}; \quad \langle\J_l\rangle  = \dfrac{1}{\left|\Omega\right|} \displaystyle\int_{\partial \Omega} \mathbf{x} \left(\J_l \cdot \mathbf{n}\right); \quad \langle\E_k\cdot\J_l\rangle = -\dfrac{1}{\left|\Omega\right|}\displaystyle\int_{\partial \Omega} V_k \left(\J_l \cdot \mathbf{n}\right);
\end{equation}
$\mathbf{n}$ is the unit outward normal to $\partial \Omega$ and, in the 2-D case, all boundary integrals are taken in the positive (counterclockwise) orientation.
We emphasize that the values 
$\left.V_k\right|_{\partial \Omega} \, \text{and} \, \left.\left(\J_l \cdot \mathbf{n}\right)\right|_{\partial \Omega} = \left.-\sigma\frac{\partial V_l}{\partial n}\right|_{\partial\Omega}$
are known from our measurement. 
	
In two dimensions, we have the additional null Lagrangians
\begin{equation}\label{nullLagrangians2-D}
  \langle \E_1 \cdot \Rp \E_2 \rangle = \dfrac{1}{\left|\Omega\right|} \displaystyle\int_{\partial \Omega} V_1 \dfrac{\partial V_2}{\partial \mathbf{t}} \quad \text{and} \quad \langle \J_1 \cdot \Rp \J_2 \rangle = -\dfrac{1}{\left|\Omega\right|} \displaystyle\int_{\partial \Omega} \left[\left(\J_1\cdot\mathbf{n}\right) \displaystyle\int_{\x_0}^{\x} \left(\J_2 \cdot \mathbf{n}\right)\right] ,
\end{equation}
where  $\Rp$ is the $2 \times 2$ matrix for a $90\,^{\circ}$ clockwise rotation, namely 
\begin{equation}\label{Rperp}
  \Rp = \begin{bmatrix} 0 & 1 \\ -1 & 0 \end{bmatrix} ,
\end{equation}
 $\mathbf{t} = -\Rp \mathbf{n} = \Rp^T \mathbf{n}$ is the unit tangent vector to $\partial \Omega$, $\frac{\partial V_2}{\partial \mathbf{t}} = \nabla V_2 \cdot \mathbf{t}$, $\x_0 \in \partial \Omega$ is arbitrary, $\x \in \partial \Omega$, and all of the integrals over $\partial \Omega$ are taken in the positive (counterclockwise) orientation.   The first formula in \eqref{nullLagrangians2-D} is found by integration by parts while the derivation of the second formula can be found in \cite{Kang:2012:SBV}.  We note that if the material under consideration is a periodic composite, it is well known that \eqref{nullLagrangians} and \eqref{nullLagrangians2-D} become
\begin{equation}\label{nullLagrangianscomposite}
    \langle \E_k \cdot \J_l \rangle = \langle \E_k \rangle \cdot \langle \J_l \rangle, \quad
    \langle \E_1 \cdot \Rp \E_2 \rangle = \langle \E_1 \rangle \cdot \Rp \langle \E_2 \rangle, \quad \text{and} \quad
    \langle \J_1 \cdot \Rp \J_2 \rangle = \langle \J_1 \rangle \cdot \Rp \langle \J_2 \rangle .
\end{equation}

\subsection{Main Idea}
For $\x \in \Omega, \mathbf{c}^{(\alpha)} \in \mathbb{R}^2$, and $\alpha = 1,2$ we define 
\begin{equation}\label{g}
  \ga(\x;\mathbf{c}^{(\alpha)}) := \displaystyle\sum_{m=1}^2 c^{(\alpha)}_m \left(\E_m^{(\alpha)}(\x) - \dfrac{\ca(\x)}{\fa}\langle\E_m^{(\alpha)}\rangle\right) ,
\end{equation}
where $\E_m^{(\alpha)}(\x) = \ca(\x)\E_m(\x)$.
Note that $\langle\ga\rangle = 0$ for all $\mathbf{c}^{(\alpha)} \in \mathbb{R}^2$.  We must also have $\langle\ga\cdot\ga\rangle\ge 0$ for all $\mathbf{c}^{(\alpha)} \in \mathbb{R}^2$; a computation shows that this is equivalent to requiring
\begin{equation}\label{star}
  \mathbf{c}^{(\alpha)}\cdot \Sa \mathbf{c}^{(\alpha)} \ge 0 \text{ for all } \mathbf{c}^{(\alpha)} \in \mathbb{R}^2  ,
\end{equation}
where
\begin{align}
  \Sa &= \begin{bmatrix} A_{11}^{(\alpha)}-\dfrac{\langle\E_1^{(\alpha)}\rangle\cdot\langle\E_1^{(\alpha)}\rangle}{\fa} & A_{12}^{(\alpha)}-\dfrac{\langle\E_1^{(\alpha)}\rangle\cdot\langle\E_2^{(\alpha)}\rangle}{\fa} \myspace
  				   A_{21}^{(\alpha)}-\dfrac{\langle\E_2^{(\alpha)}\rangle\cdot\langle\E_1^{(\alpha)}\rangle}{\fa} & A_{22}^{(\alpha)}-\dfrac{\langle\E_2^{(\alpha)}\rangle\cdot\langle\E_2^{(\alpha)}\rangle}{\fa} 
	   \end{bmatrix}  \label{Salpha}
  \intertext{and}
  A_{mn}^{(\alpha)} &= \langle\E_m^{(\alpha)}\cdot\E_n^{(\alpha)}\rangle \quad (\text{for} \,\, \alpha,m,n = 1,2) \label{Amnalpha}  .
\end{align}
The matrix $\Sa$ is symmetric by \eqref{Salpha}-\eqref{Amnalpha}; it must also be positive-semidefinite by \eqref{star}.  

We note that the quantities $\langle\E_m^{(\alpha)}\rangle$ are known; this can be seen as follows.  Since $\J = \sigma \E$, 
\begin{equation*}
  \J_1 = \sigma_1\E_1 - \sigma_2\E_2 \quad \text{and} \quad \J_2 = \sigma_2\E_1 + \sigma_1\E_2 .
\end{equation*}
For a field $\mathbf{u}$, we can ``split'' its average value over $\Omega$ into two parts as follows:
\begin{equation}\label{splitting}
  \langle\mathbf{u}\rangle = \langle\chione\mathbf{u}\rangle + \langle\chitwo\mathbf{u}\rangle .
\end{equation}
Note that the averages in \eqref{splitting} are taken over $\Omega$; in particular $\langle\chi^{(1)}\mathbf{u}\rangle$ is not the average of $\mathbf{u}$ over phase 1.
We apply this ``splitting method'' to $\E$ and $\J$ and recall that the conductivity is homogeneous in each phase to obtain the system
\begin{equation*}
  \langle\E^{(1)}\rangle + \langle\E^{(2)}\rangle = \langle\E\rangle \qquad \text{and} \qquad \sigma^{(1)}\langle\E^{(1)}\rangle + \sigma^{(2)}\langle\E^{(2)}\rangle = \langle\J\rangle,
\end{equation*}
which is easily solved for $\langle\E^{(1)}\rangle$ and $\langle\E^{(2)}\rangle$:
\begin{equation}\label{eq:E1E2}
  \langle\E^{(1)}\rangle = \frac{\sigma^{(2)}\langle\E\rangle-\langle\J\rangle}{\sigma^{(2)}-\sigma^{(1)}} \qquad \text{and} \qquad
  \langle\E^{(2)}\rangle = \frac{-\sigma^{(1)}\langle\E\rangle+\langle\J\rangle}{\sigma^{(2)}-\sigma^{(1)}}.
\end{equation}
Since $\langle\E\rangle$ and $\langle\J\rangle$ are known, the real and imaginary parts of $\langle\E^{(1)}\rangle$ and $\langle\E^{(2)}\rangle$ can be determined from \eqref{eq:E1E2} by equating the real and imaginary parts of the left- and right-hand sides of each equation.

In a similar manner, for $k,l = 1, 2$ we have
\begin{equation}\label{power values}
  \langle\E_k\cdot\J_l\rangle = \langle\chione\E_k\cdot\J_l\rangle + \langle\chitwo\E_k\cdot\J_l\rangle .
\end{equation}
The equations in \eqref{power values} are equivalent to the linear system
\begin{equation}\label{powersystem}
  \begin{bmatrix} \sigma_1^{(1)} & \sigma_1^{(2)} & -\sigma_2^{(1)} & -\sigma_2^{(2)} & 0 & 0 \\[0.1cm]
  			  \sigma_2^{(1)} & \sigma_2^{(2)} & \sigma_1^{(1)} & \sigma_1^{(2)} & 0 & 0\\[0.1cm]
			  0 & 0 & \sigma_1^{(1)} & \sigma_1^{(2)} & -\sigma_2^{(1)} & -\sigma_2^{(2)} \\[0.1cm]
			  0 & 0 & \sigma_2^{(1)} & \sigma_2^{(2)} & \sigma_1^{(1)} & \sigma_1^{(2)}
  \end{bmatrix}
  \begin{bmatrix}  A_{11}^{(1)} \\[0.1cm] A_{11}^{(2)} \\[0.1cm] A_{21}^{(1)} \\[0.1cm] A_{21}^{(2)} \\[0.1cm] A_{22}^{(1)} \\[0.1cm] A_{22}^{(2)} \end{bmatrix}
  =
  \begin{bmatrix} \langle \E_1\cdot\J_1\rangle \\[0.1cm] \langle\E_1\cdot \J_2\rangle \\[0.1cm] \langle \E_2\cdot \J_1 \rangle \\[0.1cm] \langle \E_2 \cdot \J_2 \rangle \end{bmatrix} .
\end{equation}
Recall that the right-hand side of this system is known from our measurement (see \eqref{nullLagrangians}).  Since this is an underdetermined system with infinitely-many solutions, we set $x:=A_{11}^{(1)}$ and $y:=A_{11}^{(2)}$ and solve the system \eqref{powersystem} in terms of the ``free variables'' $x$ and $y$.  In particular, we solve the system
\begin{equation}\label{reducedsystem}
  \begin{bmatrix} -\sigma_2^{(1)} & -\sigma_2^{(2)} & 0 & 0 \\[0.1cm]
  			    \sigma_1^{(1)} & \sigma_1^{(2)} & 0 & 0\\[0.1cm]
			    \sigma_1^{(1)} & \sigma_1^{(2)} & -\sigma_2^{(1)} & -\sigma_2^{(2)} \\[0.1cm]
			    \sigma_2^{(1)} & \sigma_2^{(2)} & \sigma_1^{(1)} & \sigma_1^{(2)}
  \end{bmatrix}
  \begin{bmatrix}  A_{21}^{(1)} \\[0.1cm] A_{21}^{(2)} \\[0.1cm] A_{22}^{(1)} \\[0.1cm] A_{22}^{(2)} \end{bmatrix}
  =
  \begin{bmatrix} \lla \E_1\cdot\J_1\rra - \sigma_1^{(1)} x - \sigma_1^{(2)} y \\[0.1cm] \lla \E_1\cdot \J_2\rra -\sigma_2^{(1)} x - \sigma_2^{(2)} y \\[0.1cm] \lla \E_2\cdot \J_1 \rra \\[0.1cm] \lla \E_2 \cdot \J_2 \rra \end{bmatrix}.
\end{equation}
The system \eqref{reducedsystem} has a unique solution if and only if $\beta:=\dets \ne 0$, so for the remainder of this paper we assume that $\beta \ne 0$.  

\begin{betaremark}  We chose $x = A_{11}^{(1)}$ and $y = A_{11}^{(2)}$ arbitrarily.  We could have taken $x = A_{mn}^{(\alpha)}$ for $\alpha, m, n$ either 1 or 2 and $y = A_{mn}^{(\alpha)}$ such that $y \ne x$.  In any of these cases, we would still have arrived at an underdetermined system like that in \eqref{powersystem} that has a unique solution if and only if $\beta \ne 0$, so the condition that $\beta \ne 0$ is independent of how $x$ and $y$ are defined.  
\end{betaremark}

\begin{betaremark2}  The requirement that $\beta \ne 0$ implies that the results of this paper cannot be applied if $\sigma^{(1)}$ and $\sigma^{(2)}$ are both real.  
\end{betaremark2}

Using Maple, we solve \eqref{reducedsystem} in terms of $x$ and $y$, insert the results into the matrices $S^{(1)}$ and $S^{(2)}$ (see \eqref{Salpha}), and replace $f^{(1)}$ by a \emph{test value} $f$.  Denoting the resulting matrices by $S_f^{(1)}$ and $S_f^{(2)}$ we find
\begin{equation}\label{Sxy}
  \begin{aligned}
    S^{(1)}_f(x,y) &:= \begin{bmatrix} x - \dfrac{\|\langle\Eoo\rangle\|^2}{f} & S_{21}^{(1)}(x,y,f) \\ S_{21}^{(1)}(x,y,f) & -x + \etaone - \dfrac{\|\langle\Eto\rangle\|^2}{f} \end{bmatrix} \\[0.1cm]
    S^{(2)}_f(x,y) &:= \begin{bmatrix} y - \dfrac{\|\langle\Eot\rangle\|^2}{1-f} & S_{21}^{(2)}(x,y,f) \\ S_{21}^{(2)}(x,y,f) & -y + \etatwo - \dfrac{\|\langle\Ett\rangle\|^2}{1-f} \end{bmatrix} \\
  \end{aligned}
\end{equation}
for $f \in (0,1)$, where
\begin{equation}\label{constants}
  \begin{aligned}
S_{21}^{(1)}&(x,y,f) = -\gamma x - \psione y + \xione - \dfrac{\langle\Eoo\rangle\cdot\langle\Eto\rangle}{f} ;\\
S_{21}^{(2)}&(x,y,f) =  \psitwo x + \gamma y  - \xitwo - \dfrac{\langle\Eot\rangle\cdot\langle\Ett\rangle}{1-f} ; \\
\beta &= \sigma_1^{(1)}\sigma_2^{(2)} - \sigma_2^{(1)}\sigma_1^{(2)} ; \quad
\gamma = \frac{\sigma_1^{(1)} \sigma_1^{(2)} + \sigma_2^{(1)} \sigma_2^{(2)}}{\beta} ; \quad
\psione = \frac{\left|\sigma^{(2)}\right|^2}{\beta} ; \quad 
\psitwo = \frac{\left|\sigma^{(1)}\right|^2}{\beta} ;\\
\xione  &= \dfrac{\sigma_2^{(2)} \lla\E_1\cdot\J_2\rra + \sigma_1^{(2)} \lla\E_1\cdot \J_1\rra}{\beta} ; \quad
\xitwo = \dfrac{\sigma_2^{(1)} \lla\E_1\cdot\J_2\rra + \sigma_1^{(1)} \lla\E_1\cdot \J_1\rra}{\beta} ; \\
\etaone &= \dfrac{\sigma_1^{(2)}\left(\lla\E_2\cdot\J_1\rra - \lla\E_1\cdot\J_2\rra\right) + \sigma_2^{(2)}\left(\lla\E_1\cdot\J_1\rra + \lla\E_2\cdot\J_2\rra\right)}{\beta} ; \\
\etatwo &= \dfrac{\sigma_1^{(1)} \left(\lla\E_1\cdot\J_2\rra - \lla\E_2\cdot\J_1\rra\right) - \sigma_2^{(1)}\left(\lla\E_1\cdot\J_1\rra + \lla\E_2\cdot\J_2\rra\right)}{\beta} \, .
  \end{aligned}
\end{equation}
Note that $\beta, \gamma, \psione, \psitwo, \xione, \xitwo, \etaone,$ and $\etatwo$ are known.  Moreover, we can use the relationship $\J = \sigma \E$ to rewrite $\eta^{(\alpha)}$ as
\begin{equation}\label{etarewrite}
  \eta^{(\alpha)} = \langle\ca\left(\|\E_1\|^2 + \|\E_2\|^2\right)\rangle = \langle\|\E^{(\alpha)}_1\|^2\rangle + \langle\|\E^{(\alpha)}_2\|^2\rangle \, .
\end{equation}
Note that $\eta^{(\alpha)} \ge 0$ with equality if and only if $\E^{(\alpha)} = \E^{(\alpha)}_1 + \mathrm{i}\E^{(\alpha)}_2 \equiv 0$ (up to a set of measure 0); that is, $\eta^{(\alpha)} = 0$ if and only if the electric field is 0 in phase $\alpha$.  In two dimensions with $D$ having smooth boundary the condition that the field is zero in one phase implies that it is zero everywhere; thus $\eta^{(\alpha)} = 0$ only for trivial boundary conditions.  In three dimensions the situation is less clear \cite{Alessandrini:2009:SCP}, but in practice the field will almost always be zero in one of the phases only for trivial boundary conditions.  Therefore we assume throughout the rest of this paper that $\etaone \ne 0$ and $\etatwo \ne 0$.

\begin{feasibleregion}\label{feasibleregion}
  For $f \in (0,1)$ we set \[\Fa_f := \{(x,y) \in \mathbb{R}^2 : \Sa_f(x,y) \text{ is positive-semidefinite}\} .\]
  Then the set $\F_f := \Fo_f \cap \Ft_f$ is called the \emph{feasible region associated with $f$}.  
  In addition, the set $\A := \left\{f \in (0,1) : \F_f \ne \emptyset\right\}$ is called the \emph{set of admissible test values}.   
\end{feasibleregion}
Practically, given $f \in \left(0,1\right)$, we check to see whether or not there are regions in the $xy-$plane for which $S^{(1)}_f(x,y)$ and $S^{(2)}_f(x,y)$ are simultaneously positive-semidefinite--that is, whether or not $\F_f \ne \emptyset$.  If the feasible region $\F_f$ is nonempty, then $f$ is an admissible test value, so $f \in \A$; that is, $f$ \emph{may} be the true volume fraction of phase 1.  If $\F_f = \emptyset$ we can conclude that $f$ is \emph{not} the true volume fraction of phase 1.  This argument is based on the fact that $\F_{f^{(1)}}$ cannot be empty, by \eqref{star}.

Our goal is to find the set $\A$.  If $\A$ is connected, the desired lower and upper bounds on $\fone$ will be $\inf \A$ and $\sup \A$, respectively.  If $\A$ is not connected, the structure of the bounds will be more complicated--see Figure \ref{boundsinterval}.  In Figure \ref{disconnected_fig}, the set of admissible test values is $\A= \A^* \cup \A^{**}$.  In the examples we have encountered $\A$ has always been connected.
\begin{figure}[!hbtp]
  \centering
  \begin{subfigure}{\textwidth}
    \centering
    \includegraphics{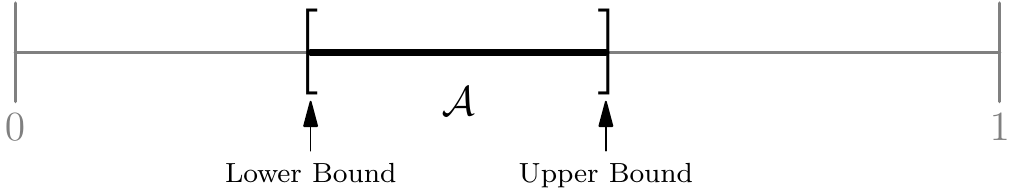}
    \caption{$\A$ connected}
    \label{connected_fig}
  \end{subfigure}
  
  \begin{subfigure}{\textwidth}
    \centering
    \includegraphics{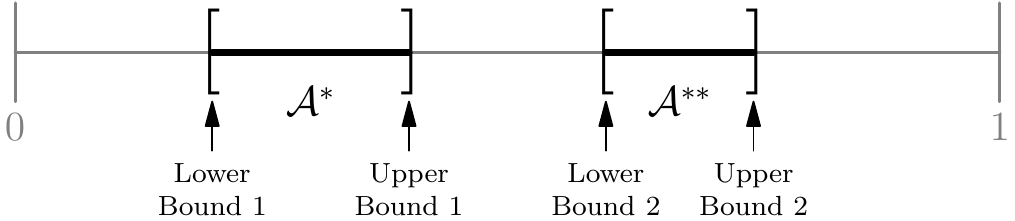}
    \caption{$\A$ disconnected}
    \label{disconnected_fig}
  \end{subfigure}
  \caption{\textit{(a) When $\A$ (the darkened interval) is connected, we have $\inf \A \le \fone \le \sup \A$.  (b) When $\A = \A^* \cup \A^{**}$ is disconnected, there will be multiple bounds on $\fone$.  In the example above, we know that either $\inf \A^* \le \fone \le \sup \A^*$ or $\inf \A^{**} \le \fone \le \sup \A^{**}$.}}  
  \label{boundsinterval}
\end{figure}


\section{Elementary Bounds}\label{section_elementary_bounds}
Recall that a symmetric $2\times 2$ matrix 
\begin{equation*}
  L = \begin{bmatrix} a & b \\ b & c \end{bmatrix}
\end{equation*}
is positive-semidefinite if and only if $a \ge 0, c \ge 0,$ and $ac-b^2 = \det L \ge 0$.  In this section we use the above requirements on the diagonal components of the matrices $S^{(1)}_f(x,y)$ and $S^{(2)}_f(x,y)$ to derive elementary bounds on $\fone$.  

By Definition \ref{feasibleregion} and the above statement, $f \in \A$ only if there is at least one point $(x,y) \in \mathbb{R}^2$ such that $\Sa_{f,mm}(x,y) \ge 0$ for $\alpha, m = 1,2$.  That is, the following inequalities must hold for all admissible volume fractions $f$ (see \eqref{Sxy}):
\begin{subequations}
  \begin{align}
    &\dfrac{\|\langle\Eoo\rangle\|^2}{f} \le x \le \etaone - \dfrac{\|\langle\Eto\rangle\|^2}{f} \label{xbounds}\\
    &\dfrac{\|\langle\Eot\rangle\|^2}{1-f} \le y \le \etatwo - \dfrac{\|\langle\Ett\rangle\|^2}{1-f} \, .\label{ybounds}
  \end{align}
\end{subequations}
\begin{elementaryfeasibleregion}\label{elementaryfeasibleregion}
  For $f \in (0,1)$, the set $\F_{f,e} := \left\{(x,y) \in \mathbb{R}^2 : \text{both } \eqref{xbounds} \text{ and } \eqref{ybounds} \text{ hold}\right\}$ is called the \emph{elementary feasible region associated with $f$}.  The set $\A_e := \left\{f \in (0,1) : \F_{f,e} \ne \emptyset\right\}$ is called the \emph{elementary set of admissible test values}.  
\end{elementaryfeasibleregion}
Geometrically, for each admissible $f \in (0,1)$, the set $\F_{f,e}$ will be the closed rectangle in $\mathbb{R}^2$ defined by the inequalities in \eqref{xbounds} and \eqref{ybounds}.
For a given $f \in (0,1)$, the set $\F_{f,e}$ will be nonempty if and only if both of the following inequalities hold.  
\begin{subequations}
  \begin{align}
    \dfrac{\|\langle\Eoo\rangle\|^2}{f} &\le \etaone-\dfrac{\|\langle\Eto\rangle\|^2}{f} \label{xs} \\
    \dfrac{\|\langle\Eot\rangle\|^2}{1-f} &\le \etatwo-\dfrac{\|\langle\Ett\rangle\|^2}{1-f} \label{ys} \, .
  \end{align}
\end{subequations}

As stated earlier we assume that $\eta^{(\alpha)} \ne 0$ ($\Leftrightarrow \E^{(\alpha)} \not\equiv 0$) for $\alpha = 1, 2$.  Then the inequalities in \eqref{xs} and \eqref{ys} may be rewritten as 
\begin{subequations}
  \begin{align}
    f &\ge f_{e,l} := \dfrac{\|\langle\Eoo\rangle\|^2 + \|\langle\Eto\rangle\|^2}{\etaone} \label{lowere}\\
    f &\le f_{e,u} := 1-\dfrac{\|\langle\Eot\rangle\|^2 + \|\langle\Ett\rangle\|^2}{\etatwo} \label{uppere},
  \end{align}
\end{subequations}
so $\A_e = \left[f_{e,l},f_{e,u}\right]$.  
We obtain elementary bounds on $\fone$ by combining \eqref{lowere} and \eqref{uppere} and noting that $\fone$ must be in $\A_e$:
\begin{equation}\label{elementarybounds}
  f_{e,l} \le \fone \le f_{e,u} \, .
\end{equation}
We emphasize that $f_{e,l}$ and $f_{e,u}$ can be computed from the boundary measurements--see \eqref{eq:E1E2} and \eqref{constants}.
Note that $0 \le f_{e,l}$ and $f_{e,u} \le 1$.  Since
\begin{equation}\label{wbounds}
  \lla\left\|\E_m^{(\alpha)} - \frac{\ca}{\fa}\lla\E_m^{(\alpha)}\rra\right\|^2\rra \ge 0
  \quad \Leftrightarrow \quad \|\langle\E^{(\alpha)}_m\rangle\|^2 \le \fa \langle\|\E^{(\alpha)}_m\|^2\rangle, 
\end{equation}
we have $f_{e,l} - f_{e,u} \le \fone + \ftwo -1 = 0$ and so $f_{e,l} \le f_{e,u}$.  

We also note that the previous sentence leads to a simpler proof of the elementary bounds.    In particular, \eqref{wbounds} implies that 
\[\|\langle\E^{(\alpha)}_1\rangle\|^2 + \|\langle\E^{(\alpha)}_2\rangle\|^2 \le f^{(\alpha)}\left[\langle\|\E^{(\alpha)}_1\|^2\rangle + \langle\|\E^{(\alpha)}_2\|^2\rangle\right].\]
The first and second inequalities in \eqref{elementarybounds} follow from this by taking $\alpha = 1$ and $\alpha = 2$, respectively (recall $f^{(2)} = 1-f^{(1)}$).  

Now \eqref{wbounds} holds as an equality if and only if 
\begin{equation*}
  \E^{(\alpha)}_m(\x) = \ca(\x) \frac{\langle\E_m^{(\alpha)}\rangle}{\fa};
\end{equation*}
that is, \eqref{wbounds} holds as an equality if and only if $\E_m$ is a constant in phase $\alpha$.  From this we see that $f_{e,l} = \fone$ if and only if $\E^{(1)} = \chione\E$ is a constant (which must be nonzero since we are assuming $\etaone \ne 0 \Leftrightarrow \E^{(1)} \not \equiv 0$) and $f_{e,u} = \fone$ if and only if $\E^{(2)} = \chitwo\E$ is a (nonzero) constant.  This implies that the bounds in \eqref{elementarybounds} are sharp in the sense that the lower bound (upper bound) is satisfied as an equality for geometries in which the electric field is constant in phase 1 (phase 2).  

For example, if phase 1 is a disk of radius $r$ centered at the origin and phase 2 is a concentric disk of radius $R > r$, then $\E^{(1)}$ will be a constant for the affine Dirichlet boundary condition $V_0 = \mathbf{u}\cdot\x$, where $\mathbf{u} \ne 0 \in \mathbb{C}^2$.  In this case $f_{e,l} = f^{(1)}$.  If we relabel the phases then $\E^{(2)}$ will be a constant, so $f_{e,u} = f^{(1)}$.  A simple laminate of materials with conductivities $\sigma^{(1)}$ and $\sigma^{(2)}$ has the property that the electric field is constant in both phases, so $f_{e,l} = f_{e,u} = f^{(1)}$ in that case.  In 2-D there are many examples of inclusions inside which the electric field is constant for certain boundary conditions--see \cite{Kang:2012:SBV} for elegant constructions of these so-called $E_{\Omega}$ inclusions.  Although the argument in \cite{Kang:2012:SBV} was applied in the real conductivity case, it extends to the complex conductivity case as well.  So for appropriate boundary conditions the field inside an $E_{\Omega}$ inclusion will be uniform even when the conductivities are complex.
We have thus proven the following theorem.  
\begin{elementary_bounds_theorem}[Elementary Bounds]\label{elementary_bounds_theorem}
  Assume that $\beta \ne 0$ (where $\beta$ is defined in \eqref{constants}).  If $\eta^{(\alpha)} \ne 0$ ($\Leftrightarrow \E^{(\alpha)} \not\equiv 0$) for $\alpha = 1, 2$, then $f_{e,l} \le \fone \le f_{e,u}$.  Moreover, $f_{e,l} = \fone$ if and only if $\E^{(1)}$ is a nonzero constant and $f_{e,u} = \fone$ if and only if $\E^{(2)}$ is a nonzero constant. 
\end{elementary_bounds_theorem}
In particular, this theorem states that
\begin{equation}\label{Ae}
  \A_e = \left[f_{e,l},f_{e,u}\right].
\end{equation}

We illustrate these ideas by considering an example, shown in Figure \ref{elementaryboundsfigure}.  
We consider an annular ring with conductivity $\sigma^{(2)}$ and a discontinuous ``inclusion phase'' $D$ consisting of the core and surrounding material outside the annulus with conductivity $\sigma^{(1)}$.  Figure \ref{annulus_fig} is a sketch of the region $\Omega$.   In Figure \ref{xybounds_fig} we plot the bounds from \eqref{xbounds} and \eqref{ybounds} versus $f$.  In particular, the lower bound in \eqref{xbounds} is plotted as a red dashed line while the upper bound is plotted as a red solid line. The red shaded region indicates the values of $f$ for which the bounds in \eqref{xbounds} hold, i.e. the values of $f$ for which there is at least one value of $x$ such that \eqref{xbounds} holds.  Similarly, the lower bound in \eqref{ybounds} is plotted as a blue dash-dotted line while the upper bound is plotted as a blue dotted line.  The blue shaded region indicates the values of $f$ for which there is at least one value of $y$ such that the bounds in \eqref{ybounds} hold.  The left and right black vertical lines indicate the elementary lower and upper bounds $f_{e,l}$ and $f_{e,u}$, respectively; the dashed magenta line indicates the true volume fraction $f^{(1)}$. The elementary set of admissible test values, $\A_e$, is indicated by the darkened interval between $f_{e,l}$ and $f_{e,u}$.  
\begin{figure}[!hbtp]
  \centering
  \begin{subfigure}{0.4\textwidth}
    \centering
    \includegraphics[scale=0.8]{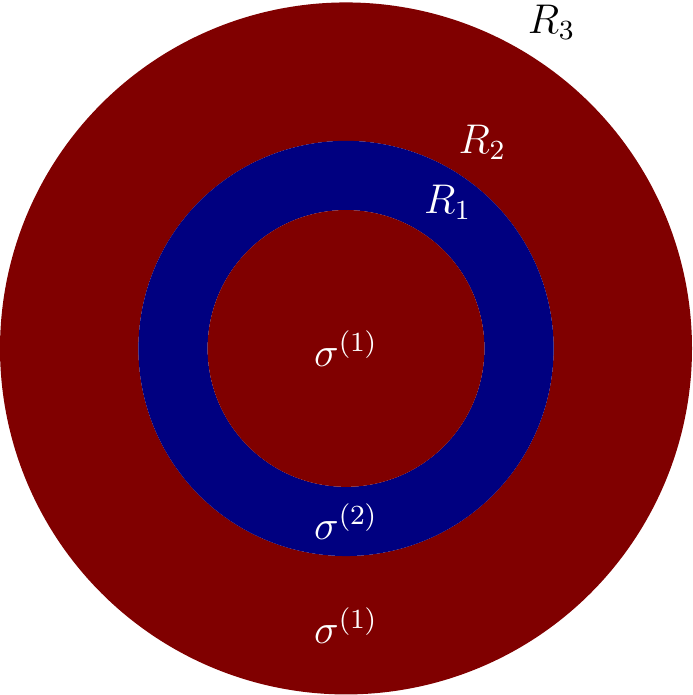}
    \caption{}
    \label{annulus_fig}
  \end{subfigure}
  \qquad \qquad
  \begin{subfigure}{0.4\textwidth}
    \centering
    \includegraphics[scale=0.8]{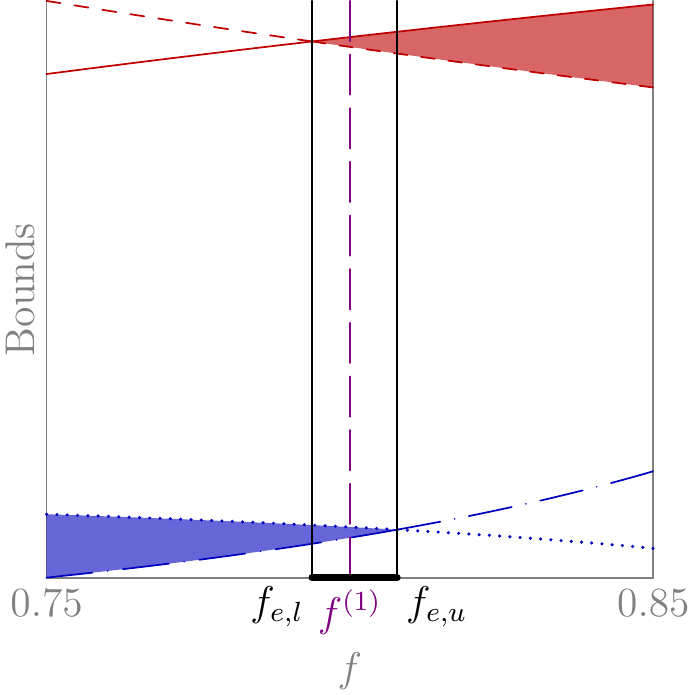}
    \caption{}
    \label{xybounds_fig}
  \end{subfigure}
  \caption{\emph{(a) A sketch of the region under consideration--our discontinuous ``inclusion phase'' $D$ (with conductivity $\sigma^{(1)}$ and volume fraction $f^{(1)}$) is the core plus the surrounding material outside the annulus.  (b) Construction of the elementary bounds.  The parameters that were used to create these plots are: radii $R_1 = 2$; $R_2 = 3$; $R_3 = 5$; conductivities $\sigma^{(1)} = 3+8\mathrm{i}$; $\sigma^{(2)} = 8+6\mathrm{i}$; the Dirichlet boundary condition was $V_0 = \mathbf{u}\cdot\mathbf{x}$, where $\mathbf{u} = \left(-2+\mathrm{i}, \frac{3}{5}-\frac{7}{5}\mathrm{i}\right)^T$.  The elementary lower and upper bounds are $f_{e,l} \approx 0.794$ and $f_{e,u} \approx 0.808,$ respectively.  The true volume fraction is $f^{(1)} = 0.8$.}}  \label{elementaryboundsfigure}
\end{figure}


\section{More Sophisticated Bounds}\label{section_more_sophisticated_bounds}
Throughout this section, we assume that $\etaone$ and $\etatwo$ are both nonzero.  We derive a method to determine bounds by using the additional requirement that $\Sa_f(x,y)$ is positive-semidefinite only if $\det \Sa_f(x,y) \ge 0$.  Using \eqref{Sxy} we find, for $\alpha = 1, 2$,
\begin{equation}\label{ps}
  \pa_f(x,y) := \det \Sa_f(x,y) = \al_1 x^2 + 2\al_2 xy + \al_3 y^2 + 2\al_4 x + 2\al_5 y + \al_6 
\end{equation}
where
\begin{equation}\label{aones}
  \left\{\begin{aligned} 
    \aone_1 &= -(1+\gamma^2) ; \quad
    \aone_2 = -\gamma\psione ; \quad 
    \aone_3 = -\left[\psione\right]^2; \\
    \aone_4 &= \dfrac{1}{2}\left\{\etaone - \frac{\|\langle\Eto\rangle\|^2}{f} + \frac{\|\langle\Eoo\rangle\|^2}{f} + 2\gamma \left[\xione - \frac{\langle\Eoo\rangle\cdot\langle\Eto\rangle}{f}\right]\right\} ; \\
    \aone_5 &= \psione\left[\xione - \frac{\langle\Eoo\rangle\cdot\langle\Eto\rangle}{f}\right] ; \\
    \aone_6 &= -\left\{\frac{\|\langle\Eoo\rangle\|^2}{f}\left[\etaone - \frac{\|\langle\Eto\rangle\|^2}{f}\right] + \left[\xione - \frac{\langle\Eoo\rangle\cdot\langle\Eto\rangle}{f}\right]^2\right\}
  \end{aligned} \right.
\end{equation}
and
\begin{equation}\label{atwos}
  \left\{\begin{aligned}
  \atwo_1 &= -\left[\psitwo\right]^2 ; \quad
  \atwo_2 = -\gamma\psitwo ; \quad
  \atwo_3 = -\left(1+\gamma^2\right); \\
  \atwo_4 &= \psitwo\left[\xitwo + \frac{\langle\Eot\rangle\cdot\langle\Ett\rangle}{1-f}\right] ; \\
  \atwo_5 &= \dfrac{1}{2}\left\{\etatwo - \frac{\|\langle\Ett\rangle\|^2}{1-f} + \frac{\|\langle\Eot\rangle\|^2}{1-f} + 2\gamma \left[\xitwo + \frac{\langle\Eot\rangle\cdot\langle\Ett\rangle}{1-f}\right]\right\} ; \\
  \atwo_6 &= -\left\{\frac{\|\langle\Eot\rangle\|^2}{1-f}\left[\etatwo - \frac{\|\langle\Ett\rangle\|^2}{1-f}\right] + \left[\xitwo + \frac{\langle\Eot\rangle\cdot\langle\Ett\rangle}{1-f}\right]^2\right\}.
  \end{aligned}\right.
\end{equation}
\begin{ellipseregion}\label{ellipseregion}
  For $\alpha = 1, 2$ and for $f \in \A_e \,(= [f_{e,l},f_{e,u}])$ we define
  \begin{equation*}
    \Ea_f := \{\left(x,y\right) \in \mathbb{R}^2 : \pa_f(x,y) \ge 0\} \quad \text{and} \quad \cE_f := \Ei .
  \end{equation*}  
\end{ellipseregion}
We will now prove several lemmas in order to establish some useful properties of the sets $\Ea_f$.
\begin{ellipseslemma}\label{ellipseslemma}
  Assume that $\beta \ne 0$ and $\eta^{(\alpha)} \ne 0$ for $\alpha = 1, 2$.  Then the following properties hold.
  \begin{enumerate}
    \item[(1)] For $f \in \left(f_{e,l}, f_{e,u}\right)$ and $\alpha = 1, 2$, $\Ea_f$ is a closed elliptic disk; its boundary is the ellipse $\partial\Ea_f = \{(x, y) \in \mathbb{R}^2 : \pa_f(x,y) = 0\}$;  
    \item[(2)] $\cE^{(1)}_{f_{e,l}}$ is a point and $\cE^{(2)}_{f_{e,l}}$ is a closed elliptic disk;
    \item[(3)] $\cE^{(1)}_{f_{e,u}}$ is a closed elliptic disk and $\cE^{(2)}_{f_{e,u}}$ is a point.
  \end{enumerate}
\end{ellipseslemma}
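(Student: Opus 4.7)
The plan is to observe that $p^{(\alpha)}_f(x,y)$ is a strictly concave quadratic in $(x,y)$, so its nonnegativity set is automatically either empty, a single point, or a closed elliptic disk (with boundary the ellipse $\{p^{(\alpha)}_f=0\}$), and then to identify in each case of the lemma which of these alternatives occurs. To see the concavity, I would compute the leading $2\times 2$ matrix $M^{(\alpha)}$ from \eqref{aones}--\eqref{atwos}: a direct calculation gives $\det M^{(\alpha)} = [\psi^{(\alpha)}]^2$, and since $\psi^{(1)}=|\sigma^{(2)}|^2/\beta$ and $\psi^{(2)}=|\sigma^{(1)}|^2/\beta$ are nonzero (by $\beta\ne 0$ together with $|\sigma^{(\alpha)}|\ne 0$), this determinant is strictly positive; combined with the manifestly negative trace $-(1+\gamma^2)-[\psi^{(\alpha)}]^2$ this shows $M^{(\alpha)}$ is negative-definite, as needed. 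Hence $p^{(\alpha)}_f$ attains a unique interior maximum $M^{(\alpha)}(f)$ and $\cE^{(\alpha)}_f$ is empty, a point, or a closed elliptic disk according as $M^{(\alpha)}(f)$ is negative, zero, or positive.

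For parts (2) and (3) I would examine the degenerate endpoints directly. At $f=f_{e,l}$ the interval in \eqref{xbounds} collapses to a single point $x^\ast=\|\langle\Eoo\rangle\|^2/f_{e,l}=\eta^{(1)}-\|\langle\Eto\rangle\|^2/f_{e,l}$, so the two diagonal entries of $S^{(1)}_{f_{e,l}}(x,y)$ become $x-x^\ast$ and $-(x-x^\ast)$. For $x\ne x^\ast$ this gives
\begin{equation*}
  p^{(1)}_{f_{e,l}}(x,y) = -(x-x^\ast)^2 - \bigl[S^{(1)}_{21}(x,y,f_{e,l})\bigr]^2 < 0,
\end{equation*}
while for $x=x^\ast$ we have $p^{(1)}_{f_{e,l}}(x^\ast,y)=-[S^{(1)}_{21}(x^\ast,y,f_{e,l})]^2\le 0$; since $S^{(1)}_{21}$ is linear in $y$ with nonzero slope $-\psi^{(1)}$, equality holds at exactly one $y^\ast$. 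Hence $\cE^{(1)}_{f_{e,l}}=\{(x^\ast,y^\ast)\}$ is a point. Part (3) follows by the completely symmetric argument at $f=f_{e,u}$, where instead the $y$-interval \eqref{ybounds} collapses and the diagonal entries of $S^{(2)}_{f_{e,u}}$ take the roles above.

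For the complementary statements in (2) and (3), as well as for part (1), I would exhibit in each case an explicit point where $p^{(\alpha)}_f > 0$, which forces $M^{(\alpha)}(f)>0$ and hence makes $\cE^{(\alpha)}_f$ a full closed elliptic disk. For instance, for $\cE^{(2)}_{f_{e,l}}$ with $f_{e,l}<f_{e,u}$, pick $y_0$ in the interior of the $y$-interval \eqref{ybounds} so that $S^{(2)}_{11}(y_0),S^{(2)}_{22}(y_0)>0$; then choose $x_0$ making $S^{(2)}_{21}(x_0,y_0,f_{e,l})=0$, which is possible because $S^{(2)}_{21}$ is linear in $x_0$ with nonzero slope $\psi^{(2)}$. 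This yields $p^{(2)}_{f_{e,l}}(x_0,y_0)=S^{(2)}_{11}(y_0)S^{(2)}_{22}(y_0)>0$. For part (1), the same recipe works on both phases simultaneously: for $f\in(f_{e,l},f_{e,u})$, the intervals \eqref{xbounds} and \eqref{ybounds} both have positive length, so one can pick $x_0$ in the interior of \eqref{xbounds} and then adjust $y$ to zero out $S^{(1)}_{21}$ (giving $p^{(1)}_f > 0$), and symmetrically for phase $2$.

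The essential obstacles are really bookkeeping. The only point that requires attention is making sure the various quantities one divides by (namely $\psi^{(1)}$ and $\psi^{(2)}$, and the slopes of the diagonal entries in $x$ or $y$) are truly nonzero; this traces back to $\beta\ne 0$ and $|\sigma^{(\alpha)}|\ne 0$, both of which are standing hypotheses. A small remark should note that items (2) and (3) implicitly assume the non-degenerate case $f_{e,l}<f_{e,u}$, since otherwise both endpoints coincide and the $y$- or $x$-interval used to certify the elliptic-disk alternative has zero length; in that degenerate case one already knows $\fone$ exactly from Theorem \ref{elementary_bounds_theorem} and the sophisticated bounds are unnecessary.
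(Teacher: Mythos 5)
Your proof is correct, and its skeleton (strict concavity of the quadratic $\pa_f$, then classification of $\{\pa_f\ge 0\}$ as empty, a point, or a closed elliptic disk according to the sign of the maximum) matches the paper's. Where you genuinely diverge is in how the sign of the maximum is determined. The paper computes $\pa_{f,\max}$ in closed form, obtaining the perfect square $\frac{1}{4f_*^2}\{\eta^{(\alpha)}f_*-[\|\langle\E_1^{(\alpha)}\rangle\|^2+\|\langle\E_2^{(\alpha)}\rangle\|^2]\}^2$, from which nonnegativity is manifest and the vanishing locus is read off as exactly $f=f_{e,l}$ (for $\alpha=1$) or $f=f_{e,u}$ (for $\alpha=2$). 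You instead avoid that computation entirely: for the elliptic-disk cases you exhibit a point with $\pa_f>0$ by taking the relevant diagonal entry in the interior of its interval and zeroing the off-diagonal entry $S^{(\alpha)}_{21}$ (legitimate since its slope $\mp\psi^{(\alpha)}$ is nonzero when $\beta\ne 0$), and for the point cases you observe that the collapsed interval forces $p^{(1)}_{f_{e,l}}=-(x-x^\ast)^2-[S^{(1)}_{21}]^2\le 0$ with a unique zero. Both arguments are sound; your route is lighter on algebra and more transparent about \emph{why} the degeneration happens exactly at the elementary bounds (the rectangle $\F_{f,e}$ collapses), while the paper's explicit formula \eqref{pfmax} earns its keep later, since the analogous quantity $\tpa_{f,\max}$ in \eqref{ptildemax} drives the improved elementary bounds of Section \ref{section_additional_null_Lagrangians}. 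Your closing remark that items (2) and (3) implicitly require $f_{e,l}<f_{e,u}$ is a fair observation that the paper's statement glosses over (its own formula shows that if $f_{e,l}=f_{e,u}$ both sets collapse to points at that common value).
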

\begin{proof}
  The discriminant of $\pa_f$ is $\al_1\al_3 - \left[\al_2\right]^2 = \left[\psi^{(\alpha)}\right]^2 > 0$ for all $f \in \A_e$.
  Thus the graph of $\pa_f$ is an elliptic paraboloid for all $f \in \A_e$.  The Hessian matrix of $\pa_f$ is 
  \begin{equation*}
    H^{(\alpha)}_f := \begin{bmatrix} 2\al_1 & 2\al_2 \\[0.1cm] 2\al_2 & 2\al_3 \end{bmatrix}.
  \end{equation*}  
  By \eqref{constants}, \eqref{aones}, and \eqref{atwos}, $\al_1 < 0$ and $\det H_f^{(\alpha)} = 4\left[\psi^{(\alpha)}\right]^2 > 0$, so $H_f^{(\alpha)}$ is negative-definite for all $f \in \A_e$; thus $\pa_f$ is concave for all $f \in \A_e$.  By Definition \ref{ellipseregion}, therefore, $\Ea_f$ is the intersection of the plane $z = 0$ with the graph of $\pa_f$.  
  
  For $f \in \A_e$ we define
  \begin{equation}\label{pmax}
    \pa_{f,\max} := \displaystyle\max_{(x,y) \in \mathbb{R}^2} \pa_f(x,y) .
  \end{equation}
  Then $\Ea_f$ will be a closed elliptic disk with boundary 
  $\partial\Ea_f = \{(x, y) \in \mathbb{R}^2 : \pa_f(x,y) = 0\}$
  if and only if $\pa_{f, \max} > 0$, a point if and only if $\pa_{f,\max} = 0$, or the empty set if and only if $\pa_{f,\max} < 0$.  Using calculus, we find that the maximum of $\pa_f$ occurs at the point
  \begin{equation}\label{center}
    \mathbf{r}^{(\alpha)}_f := \left(\overline{x}^{(\alpha)}_f, \overline{y}^{(\alpha)}_f\right)
    := \left(\dfrac{\al_2\al_5-\al_3\al_4}{\left[\psi^{(\alpha)}\right]^2}, \dfrac{\al_2\al_4-\al_1\al_5}{\left[\psi^{(\alpha)}\right]^2}\right) .
  \end{equation}
  Then we have
  \begin{equation}\label{pfmax}
    \pa_{f, \max} = \dfrac{1}{4 f_*^2}\left\{\eta^{(\alpha)} f_* - \left[\|\langle\E_1^{(\alpha)}\rangle\|^2 + \|\langle\E_2^{(\alpha)}\rangle\|^2\right]\right\}^2,
   \end{equation}
   where
   \begin{equation}\label{fstar}
      f_* := \begin{cases} f &\text{if } \alpha = 1 \\
  						    1-f &\text{if } \alpha = 2 .
			   \end{cases}
   \end{equation}
  Thus $\pa_{f,\max} \ge 0$ for all $f \in \A_e$; in particular $p^{(1)}_{f,\max} = 0$ if and only if $f = f_{e, l}$ (see \eqref{lowere}) while $p^{(2)}_{f,\max} = 0$ if and only if $f = f_{e, u}$ (see \eqref{uppere}).  Therefore $\Ea_f$ is a closed elliptic disk for $f \in \left(f_{e.l}, f_{e,u}\right)$, $\cE^{(1)}_{f_{e,l}}$ is a point and $\cE^{(2)}_{f_{e,l}}$ is a closed elliptic disk, and $\cE^{(1)}_{f_{e,u}}$ is a closed elliptic disk and $\cE^{(2)}_{f_{e,u}}$ is a point.
\end{proof}

\begin{boundingbox}\label{boundingbox}
  Suppose $\beta \ne 0$ and $\eta^{(\alpha)} \ne 0$ for $\alpha = 1, 2$.  Then for each $f \in \A_e \, (= [f_{e,l}, f_{e,u}])$, $\cE_f \subseteq \F_{f,e}$. 
\end{boundingbox}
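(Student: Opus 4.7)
\textbf{Proof plan for Lemma \ref{boundingbox}.}  The plan is to exploit the standard characterization of positive-semidefiniteness for $2\times 2$ symmetric matrices: such a matrix is positive-semidefinite if and only if its trace and determinant are both non-negative, in which case (since positive-semidefinite matrices have non-negative diagonal entries) both diagonal entries are automatically $\geq 0$.  The set $\F_{f,e}$ is, by construction, cut out precisely by the non-negativity of the four diagonal entries of $S^{(1)}_f(x,y)$ and $S^{(2)}_f(x,y)$ (see \eqref{Sxy}, \eqref{xbounds}, \eqref{ybounds}), so it will suffice to show that for $(x,y) \in \cE_f$ both matrices are positive-semidefinite.

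First, I would compute the trace of each matrix directly from \eqref{Sxy}:
\begin{equation*}
  \tr S^{(1)}_f(x,y) = \etaone - \frac{\|\langle\Eoo\rangle\|^2 + \|\langle\Eto\rangle\|^2}{f}, \qquad
  \tr S^{(2)}_f(x,y) = \etatwo - \frac{\|\langle\Eot\rangle\|^2 + \|\langle\Ett\rangle\|^2}{1-f}.
\end{equation*}
Note that both traces are independent of $(x,y)$.  Comparing with the definitions of $f_{e,l}$ and $f_{e,u}$ in \eqref{lowere}, \eqref{uppere}, the hypothesis $f \in \A_e = [f_{e,l}, f_{e,u}]$ is exactly equivalent to $\tr S^{(1)}_f \ge 0$ and $\tr S^{(2)}_f \ge 0$ (using $\etaone, \etatwo > 0$).

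Next, for any $(x,y) \in \cE_f = \Ei$ we have by Definition \ref{ellipseregion} that $\det S^{(\alpha)}_f(x,y) = \pa_f(x,y) \ge 0$ for $\alpha = 1, 2$.  Combined with the trace inequalities above, the standard criterion gives that $S^{(\alpha)}_f(x,y)$ is positive-semidefinite for each $\alpha$; in particular its two diagonal entries are non-negative.  Reading off these four diagonal inequalities from \eqref{Sxy} recovers exactly \eqref{xbounds} and \eqref{ybounds}, so $(x,y) \in \F_{f,e}$ by Definition \ref{elementaryfeasibleregion}.

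I do not foresee a significant obstacle here.  The only point that requires care is the boundary case $f \in \{f_{e,l}, f_{e,u}\}$, where one of the traces vanishes; there the trace-plus-determinant criterion still forces both eigenvalues to be zero on the relevant matrix, so both of its diagonal entries vanish as well, and the containment remains valid (consistent with parts (2)–(3) of Lemma \ref{ellipseslemma}, where one of the $\cE^{(\alpha)}_f$ degenerates to a single point).
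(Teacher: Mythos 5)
Your proof is correct, and it takes a cleaner, more self-contained route than the paper's. The paper argues via Lemma \ref{ellipseslemma}: it identifies $\F_f^{(\alpha)}$ (the positive-semidefiniteness region of $\Sa_f$) with the elliptic disk $\cE_f^{(\alpha)}$, asserting at the outset that $\F_{f,e}$ contains $\F_f^{(1)}$ and then invoking the fact that the zero set of $\pa_f$ is the boundary of $\F_f^{(\alpha)}$; the justification for that opening containment is essentially deferred to the geometric observations in the remark following the lemma (each elliptic disk is inscribed in its strip $X_f$ or $Y_f$). Your argument instead works pointwise with the trace--determinant criterion for $2\times 2$ symmetric matrices: you observe that $\tr \Sa_f$ is independent of $(x,y)$ and that its non-negativity for $\alpha=1,2$ is \emph{exactly} the condition $f\in\A_e$, so on $\cE_f$ the determinant condition upgrades to full positive-semidefiniteness and the diagonal inequalities \eqref{xbounds}--\eqref{ybounds} follow for free. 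This buys two things: it makes explicit why the hypothesis $f\in\A_e$ is needed (it is precisely the trace condition), and it substantiates the containment $\F_f^{(\alpha)}=\cE_f^{(\alpha)}\subseteq\F_{f,e}$ that the paper's proof states without detailed justification. (The paper does use the same trace observation, but only later, in the proof of Lemma \ref{eigenvaluelemma}.) Your handling of the boundary cases $f\in\{f_{e,l},f_{e,u}\}$, where vanishing trace together with a non-negative determinant forces the corresponding $\Sa_f$ to vanish at the single point of $\cE_f^{(\alpha)}$, is also correct and consistent with parts (2)--(3) of Lemma \ref{ellipseslemma}.
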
  
\begin{boundingboxremark}
This Lemma states that, for each $f \in \A_e$, the intersection of the elliptic disks (the set $\cE_f$) is contained in the elementary feasible region associated with $f$ (the set $\F_{f,e})$.  Thus the feasible region associated with $f$ (the set $\F_f$) is simply the set $\cE_f$.  In other words, if the elliptic disks $\cE^{(1)}_f$ and $\cE^{(2)}_f$ intersect so that $\cE_f \ne \emptyset$, then $f \in \A$; if the elliptic disks do not intersect so that $\cE_f = \emptyset$, then $f \notin \A$.
\end{boundingboxremark}
\begin{proof}
  For each $f \in \left[f_{e,l},f_{e,u}\right]$ the set $\F_{f,e}$ contains $\F_f^{(1)}$.  The boundary of the set $\F_f^{(1)}$ is described by the equation $p^{(1)}_f(x,y) = 0$ which, according to Lemma \ref{ellipseslemma}, is either an ellipse, a point, or the empty set.  Therefore $\cE_f^{(1)} = \F_f^{(1)} \subseteq \F_{f,e}$.  A similar argument shows that $\cE_f^{(2)} = \F_f^{(2)} \subseteq \F_{f,e}$. 
\end{proof}
\begin{boundingboxremark}\label{boundingboxremark}
In fact, motivated by \eqref{xbounds} one can show that the ellipse $\partial \cE^{(1)}_f$ is tangent to the boundary of the set
\begin{equation}\label{X}
  X_f := \left\{(x,y) \in \mathbb{R}^2 : \dfrac{\|\langle\Eoo\rangle\|^2}{f} \le x \le \etaone - \dfrac{\|\langle\Eto\rangle\|}{f} \right\}
\end{equation}
for $f \in (f_{e,l},f_{e,u}]$.
Similarly, motivated by \eqref{ybounds} one can also show that the ellipse $\partial \cE^{(2)}_f$ is tangent to the boundary of the set
\begin{equation}\label{Y}
  Y_f := \left\{(x,y) \in \mathbb{R}^2 : \dfrac{\|\langle\Eot\rangle\|^2}{1-f} \le y \le \etatwo - \dfrac{\|\langle\Ett\rangle\|^2}{1-f} \right\}
\end{equation}
for $f \in [f_{e,l},f_{e,u})$.  See Figure \ref{ellipses_fig} for an illustration of this fact.  The set $X_f \cap Y_f$ is in fact the rectangle $\F_{f,e}$ and the test values $f$ where this rectangle collapses to a line segment are the elementary bounds.
\end{boundingboxremark}
\begin{twointersectionpoints}\label{twointersectionpoints}
  Suppose $\beta \ne 0$ and $\eta^{(\alpha)} \ne 0$ for $\alpha = 1, 2$.  Then for each $f \in \A_e$ the set $\partial \cE^{(1)}_f \cap \partial \cE^{(2)}_f$ contains at most 2 points.  
\end{twointersectionpoints}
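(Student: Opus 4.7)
The plan is to reduce the problem to a line--ellipse intersection via the classical pencil-of-conics trick, exploiting a hidden algebraic identity between the pure quadratic parts of $p^{(1)}_f$ and $p^{(2)}_f$.

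The first step is to establish the identity $1 + \gamma^2 = \psione \psitwo$. From the definitions in \eqref{constants} this is equivalent to
\[\beta^2 + \bigl(\sigma_1^{(1)}\sigma_1^{(2)} + \sigma_2^{(1)}\sigma_2^{(2)}\bigr)^2 = |\sigma^{(1)}|^2 |\sigma^{(2)}|^2,\]
which is the Brahmagupta--Fibonacci identity $(ad-bc)^2 + (ac+bd)^2 = (a^2+b^2)(c^2+d^2)$ applied with $(a, b, c, d) = (\sigma_1^{(1)}, \sigma_2^{(1)}, \sigma_1^{(2)}, \sigma_2^{(2)})$. Combined with the explicit formulas \eqref{aones} and \eqref{atwos}, this immediately yields $\psitwo \aone_j = \psione \atwo_j$ for $j = 1, 2, 3$, so the pure quadratic parts of $p^{(1)}_f$ and $p^{(2)}_f$ are proportional.

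Consequently the polynomial
\[q_f(x,y) := \psitwo \, p^{(1)}_f(x,y) - \psione \, p^{(2)}_f(x,y)\]
has total degree at most one in $(x, y)$. Any $(x, y) \in \partial \cE^{(1)}_f \cap \partial \cE^{(2)}_f$ satisfies $p^{(1)}_f(x,y) = p^{(2)}_f(x,y) = 0$ and therefore also $q_f(x,y) = 0$, so the intersection is contained in the zero set $L$ of $q_f$. If $q_f \not\equiv 0$, then $L$ is either empty (in the nonzero-constant case) or a straight line, and by elementary planar geometry a line meets the ellipse $\partial \cE^{(1)}_f$ in at most two points — which, together with Lemma \ref{ellipseslemma} to handle the degenerate endpoints $f = f_{e,l}, f_{e,u}$ where one of the $\cE^{(\alpha)}_f$ collapses to a single point, gives the claim.

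The principal obstacle is the case $q_f \equiv 0$, which would force $\partial \cE^{(1)}_f = \partial \cE^{(2)}_f$ and thus infinitely many intersection points. To rule this out I would examine the additional constraints $\psitwo \aone_j = \psione \atwo_j$ for $j = 4, 5, 6$ coming from the linear and constant terms; using the explicit expressions in \eqref{aones}--\eqref{atwos} together with the standing hypotheses $\beta \ne 0$ and $\eta^{(\alpha)} \ne 0$, the plan is to show that these cannot all be satisfied simultaneously. Tracking how the $f$-dependent quantities $\|\langle\E_m^{(\alpha)}\rangle\|^2/f_*$ and $\langle\E_1^{(\alpha)}\rangle\cdot\langle\E_2^{(\alpha)}\rangle/f_*$ enter — and in particular the mismatch between the way $f$ appears in $p^{(1)}_f$ and $1-f$ appears in $p^{(2)}_f$ — is the most delicate bookkeeping in the argument.
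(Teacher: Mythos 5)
Your proof is essentially the paper's: your combination $\psitwo\, p^{(1)}_f - \psione\, p^{(2)}_f$ equals $\beta^{-1}\left(|\sigma^{(1)}|^2 p^{(1)}_f - |\sigma^{(2)}|^2 p^{(2)}_f\right)$, which is exactly the affine function $\mu_4 x + \mu_5 y + \mu_6$ the paper forms (your identity $1+\gamma^2 = \psione\psitwo$ is just a cleaner way of seeing the paper's assertion that $\mu_1=\mu_2=\mu_3=0$), and both arguments finish by intersecting the resulting line with the ellipse $\partial\cE^{(1)}_f$, i.e.\ a quadratic in one variable with at most two roots. The degenerate case you flag ($q_f\equiv 0$, coincident ellipses) is not treated in the paper either --- its proof simply solves the linear equation for $y$, implicitly taking $\mu_5\ne 0$ and citing Lemma \ref{boundingbox} only for finiteness of $y$ --- so your outline matches the published argument both in substance and in what it leaves unaddressed.
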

\begin{proof}
  Fix $f \in \A_e$ and suppose that the point $(x,y) \in \partial \cE^{(1)}_f \cap \partial \cE^{(2)}_f$ (note that $\partial \cE^{(\alpha)}_f \ne \emptyset$ by Lemma \ref{ellipseslemma}).  Then for $\alpha = 1, 2$ we must have $\pa_f(x,y) = 0$, where $\pa_f$ is defined in \eqref{ps}.  This implies that
  \begin{equation}\label{mueq}
    0 = \left|\sigma^{(1)}\right|^2 p^{(1)}_f(x,y) - \left|\sigma^{(2)}\right|^2 p^{(2)}_f(x,y) = \mu_4 x + \mu_5 y + \mu_6 ,
  \end{equation}  
  where $\mu_k = \left|\sigma^{(1)}\right|^2\aone_k - \left|\sigma^{(2)}\right|^2\atwo_k$ for $k = 1, 3,$ and $6$, and $\mu_k = 2\left|\sigma^{(1)}\right|^2\aone_k - 2\left|\sigma^{(2)}\right|^2\atwo_k$ for $k = 2 ,4,$ and $5$.  By \eqref{aones} and \eqref{atwos}, $\mu_1 = \mu_2 = \mu_3 = 0$ for all $f \in \A_e$.  We solve \eqref{mueq} for $y$ to find 
  \begin{equation}\label{yy}
    y = -\dfrac{\mu_4 x + \mu_6}{\mu_5} .
  \end{equation}
  Lemma \ref{boundingbox} implies that $y$ is finite for all $f \in \A_e$.  Inserting \eqref{yy} into the equation $p^{(1)}(x,y) = 0$ we find that $x$ must be a root of the quadratic 
  \begin{equation}\label{q}
    q(x):= \nu_1 x^2 + \nu_2 x + \nu_3 ,
  \end{equation}
  where
  \begin{equation}\label{nus}
    \left\{\begin{aligned}
      \nu_1 &= \aone_1\mu_5^2 - 2\aone_2\mu_4\mu_5 + \aone_3\mu_4^2;\\
      \nu_2 &= 2\left[-\aone_2\mu_5\mu_6+ \aone_3\mu_4\mu_6+\aone_4\mu_5^2-\aone_5\mu_4\mu_5\right];\\
      \nu_3 &= \aone_3\mu_6^2-2\aone_5\mu_5\mu_6 + \aone_6\mu_5^2.
    \end{aligned}\right.
  \end{equation}
  (Note that $\nu_1, \nu_2,$ and $\nu_3$ are all functions of $f$).
  The discriminant of $q$ is 
  \begin{equation}\label{Delta}
    \Delta_f:=\nu_2^2-4\nu_1\nu_3 .
  \end{equation}
  Therefore the set $\partial\cE^{(1)}_f \cap \partial\cE^{(2)}_f$ will be 2 (real) points if $\Delta_f > 0$, 1 (real) point if $\Delta_f = 0$, and 0 (real) points if $\Delta_f < 0$.
\end{proof}
Lemma \ref{ellipseslemma} implies that $\cE^{(1)}_f$ and $\cE^{(2)}_f$ are nonempty for all $f \in \A_e$, and Lemma \ref{boundingbox} implies that $\F_f = \cE_f$ for all $f \in \A_e$.  Therefore $f \in \A$ if $\Delta_f \ge 0$, since $\Delta_f \ge 0$ implies $\cE_f \ne \emptyset$.  If $\Delta_f < 0$, $\cE_f$ may be empty or nonempty.  For example, if one of the elliptic disks is completely inside the other, $\Delta_f < 0$ but $\cE_f \ne \emptyset$.  

To determine whether or not $\cE_f$ is empty when $\Delta_f < 0$ we examine the following 4 possibilities.
\begin{enumerate}
\item[(1)] If $p^{(1)}_f(\mathbf{r}^{(2)}) < 0$ and $p^{(2)}_f(\mathbf{r}^{(1)}) < 0$, then the elliptic disks (which may be points) are disjoint since neither elliptic disk contains the center of the other.  Thus $\cE_f = \emptyset$ which implies that $f \notin \A$;
\item[(2)] if $p^{(1)}_f(\mathbf{r}^{(2)}) \ge 0$ and $p^{(2)}_f(\mathbf{r}^{(1)}) < 0$ then the elliptic disk $\cE^{(1)}_f$ contains the center of the elliptic disk $\cE^{(2)}_f$ but not vice versa.  In this case $\cE^{(2)}_f \subsetneq \cE^{(1)}_f \Rightarrow \cE_f \ne \emptyset \Rightarrow f \in \A$;
\item[(3)] if $p^{(1)}_f(\mathbf{r}^{(2)}) < 0$ and $p^{(2)}_f(\mathbf{r}^{(1)}) \ge 0$ then $\cE^{(1)}_f \subsetneq \cE^{(2)}_f \Rightarrow \cE_f \ne \emptyset \Rightarrow f \in \A$;
\item[(4)] if $p^{(1)}_f(\mathbf{r}^{(2)}) \ge 0$ and $p^{(2)}_f(\mathbf{r}^{(1)}) \ge 0$ we can conclude that $\cE_f \ne \emptyset$ and so $f \in \A$.
\end{enumerate}
Unfortunately $\Delta_f$ is a complicated function of $f$, so it is difficult if not impossible to determine the sign of $\Delta_f$ analytically.  The expressions for $p^{(1)}(\mathbf{r}^{(2)})$ and $p^{(2)}(\mathbf{r}^{(1)})$ are non-trivial as well, so the above steps must be carried out numerically.  (For example, for the configuration considered in Figure \ref{annulus_fig}, $\Delta_f$ is essentially a rational function with an irreducible polynomial of degree 8 in the numerator and an irreducible polynomial of degree 2 in the denominator.  The functions $p^{(1)}_f(\mathbf{r}^{(2)})$ and $p^{(2)}_f(\mathbf{r}^{(1)})$ are rational functions with irreducible polynomials of degree 4 in the numerator).  We have thus proven the following theorem.
\begin{noRperptheorem}\label{noRperptheorem}
  Suppose $\beta \ne 0$ and $\eta^{(\alpha)} \ne 0$ for $\alpha = 1, 2$.  Then for $f \in \A_e \, (= \left[f_{e,l}, f_{e,u}\right])$, if $\Delta_f \ge 0$ then $f \in \A$, where $\Delta_f$ is defined in \eqref{Delta}.  If $\Delta_f < 0$, then $f \notin \A$ if and only if $p^{(1)}_f(\mathbf{r}^{(2)}) < 0$ and $p^{(2)}_f(\mathbf{r}^{(1)}) < 0$.  
\end{noRperptheorem}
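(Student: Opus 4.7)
The plan is to combine Lemma \ref{boundingbox}, which gives $\F_f = \cE_f = \Eo_f \cap \Et_f$ (so $f \in \A$ if and only if $\cE_f \ne \emptyset$), with Lemma \ref{ellipseslemma}, which guarantees that for $f \in \A_e$ each set $\Ea_f$ is a (possibly degenerate) closed elliptic disk centered at $\mathbf{r}^{(\alpha)}$. The question thus reduces to deciding when two closed elliptic disks overlap, and I would split the argument according to the sign of the discriminant $\Delta_f$ from Lemma \ref{twointersectionpoints}.

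First I would treat the case $\Delta_f \ge 0$. From the derivation leading to \eqref{Delta}, this is precisely the condition that the quadratic $q(x)$ in \eqref{q} has a real root; substituting back through \eqref{yy} yields a pair $(x, y) \in \mathbb{R}^2$ with $p^{(1)}_f(x,y) = p^{(2)}_f(x,y) = 0$. This common point lies in $\partial \Eo_f \cap \partial \Et_f \subseteq \cE_f$, so $\cE_f$ is nonempty and $f \in \A$, establishing the first assertion.

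Next I would handle the case $\Delta_f < 0$, for which Lemma \ref{twointersectionpoints} yields $\partial \Eo_f \cap \partial \Et_f = \emptyset$. Since each elliptic disk is closed and convex, a standard planar argument (any continuous path joining an interior point of one disk to a point outside it must cross its boundary) shows that in this situation either $\Eo_f$ and $\Et_f$ are disjoint, or one is contained in the other. To distinguish the subcases I would test whether each disk contains the center of the other, using $\mathbf{r}^{(2)} \in \Eo_f \Leftrightarrow p^{(1)}_f(\mathbf{r}^{(2)}) \ge 0$ and $\mathbf{r}^{(1)} \in \Et_f \Leftrightarrow p^{(2)}_f(\mathbf{r}^{(1)}) \ge 0$. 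The four sign combinations for the pair $\bigl(p^{(1)}_f(\mathbf{r}^{(2)}), p^{(2)}_f(\mathbf{r}^{(1)})\bigr)$ correspond exactly to cases (1)--(4) enumerated immediately before the theorem statement; only case (1), where both values are strictly negative, forces $\cE_f = \emptyset$, while the other three produce nested disks and hence $\cE_f \ne \emptyset$. This yields the stated equivalence $f \notin \A$ iff $p^{(1)}_f(\mathbf{r}^{(2)}) < 0$ and $p^{(2)}_f(\mathbf{r}^{(1)}) < 0$.

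The main obstacle is the ``disjoint or nested'' dichotomy when $\Delta_f < 0$. Geometrically this is intuitive, but a clean justification requires a short topological argument exploiting the convexity and connectedness of the elliptic disks; alternatively, one can invoke the fact that the boundary curves of two properly overlapping convex bodies in the plane must cross, so that the absence of boundary intersections already forces the dichotomy. Once this point is settled, the rest is a bookkeeping exercise with the explicit coefficients in \eqref{aones}--\eqref{atwos} and the centers in \eqref{center}, and no further analysis is required.
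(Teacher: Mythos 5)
Your proposal is correct and follows essentially the same route as the paper: the authors likewise reduce the question to the nonemptiness of $\cE_f = \F_f$ via Lemmas \ref{ellipseslemma} and \ref{boundingbox}, use a real root of $q$ to conclude $f \in \A$ when $\Delta_f \ge 0$, and dispose of the case $\Delta_f < 0$ by exactly the same four-way test on the signs of $p^{(1)}_f(\mathbf{r}^{(2)})$ and $p^{(2)}_f(\mathbf{r}^{(1)})$. The only difference is cosmetic: you flag and sketch a justification for the ``disjoint or nested'' dichotomy, which the paper asserts without elaboration.
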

As mentioned in the introduction, the bounds derived in this section may or may not be tighter than the elementary bounds from Section \ref{section_elementary_bounds}.  For example, the bounds from this section would be the same as the elementary bounds if $\Delta_f \ge 0$ for all $f \in \A_e$.  We also note that Lemmas \ref{ellipseslemma} and \ref{twointersectionpoints} hold for all $f \in (0,1)$.  This shows the importance of the elementary bounds: if we did not take them into account and only looked at the set $\cE_f$ for all $f \in (0,1)$, it may be that $\cE_f \ne\emptyset$ for all $f \in (0,1)$ (this is indeed the case for the configuration in Figure \ref{elementaryboundsfigure}).  This would only give the trivial bounds $0 < f^{(1)} < 1$.  Although we do not know if this is generally the case, in all of the two dimensional examples we have encountered thus far the ``more sophisticated'' bounds determined using the elliptic disks have been the same as the elementary bounds--see Figures \ref{ellipses_fig} and \ref{numerical_figure}, for example.  So it is not clear if the ``more sophisticated'' bounds are ever better than the elementary bounds.  Irrespective of this, the analysis presented here is useful for the treatment presented in the next section where we do obtain tighter bounds using elliptic disks.  Also, the more sophisticated bounds developed here are beneficial for periodic composite materials, where one may be given the volume fraction and wish to determine bounds on the possible values of the complex pair $(\langle\E\rangle, \langle\J\rangle)$.
\begin{figure}[!hbtp]
  \centering
  \begin{subfigure}{0.2\textwidth}
    \centering
    \includegraphics[scale=0.5]{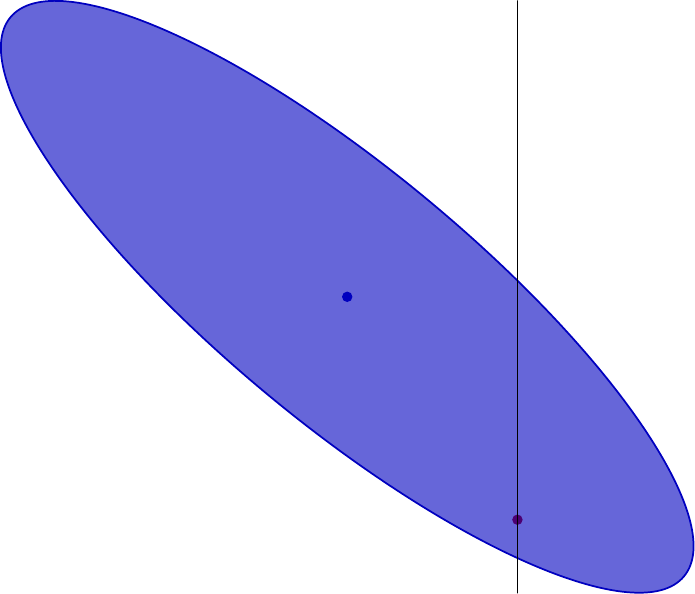}
    \caption{}
    \label{ellipse_static_2_elementary_lower_bound}
  \end{subfigure}
  \qquad
  \begin{subfigure}{0.2\textwidth}
    \centering
    \includegraphics[scale=0.5]{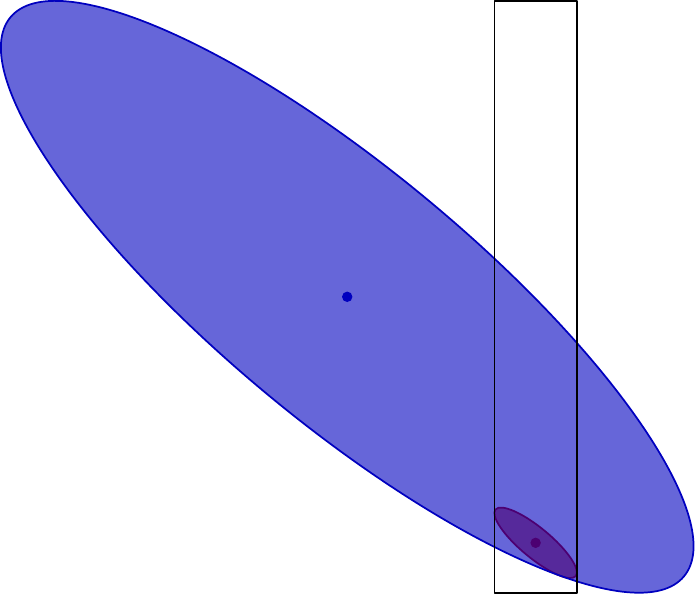}
    \caption{}
    \label{ellipse_static_2_firstdisczero}
  \end{subfigure}
  \qquad
  \begin{subfigure}{0.2\textwidth}
    \centering
    \includegraphics[scale=0.5]{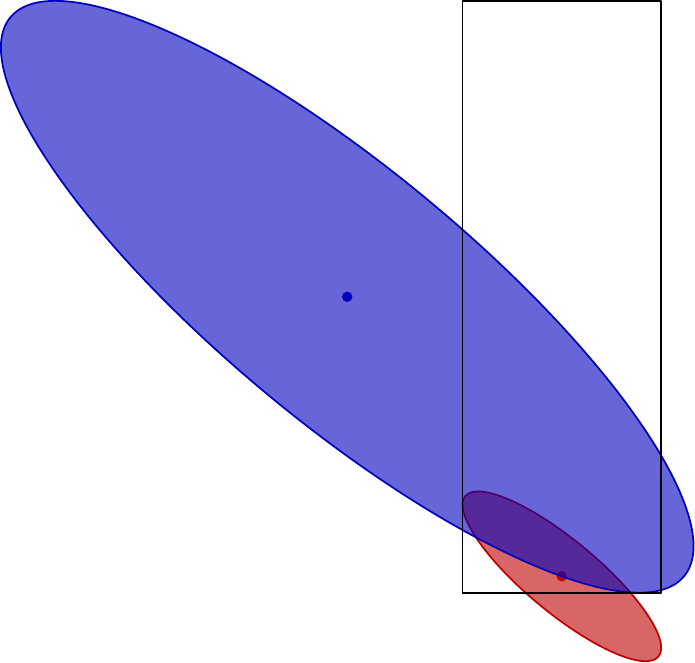}
    \caption{}
    \label{ellipse_static_2_p2c1zero}
  \end{subfigure}
  \qquad
  \begin{subfigure}{0.2\textwidth}
    \centering
    \includegraphics[scale=0.5]{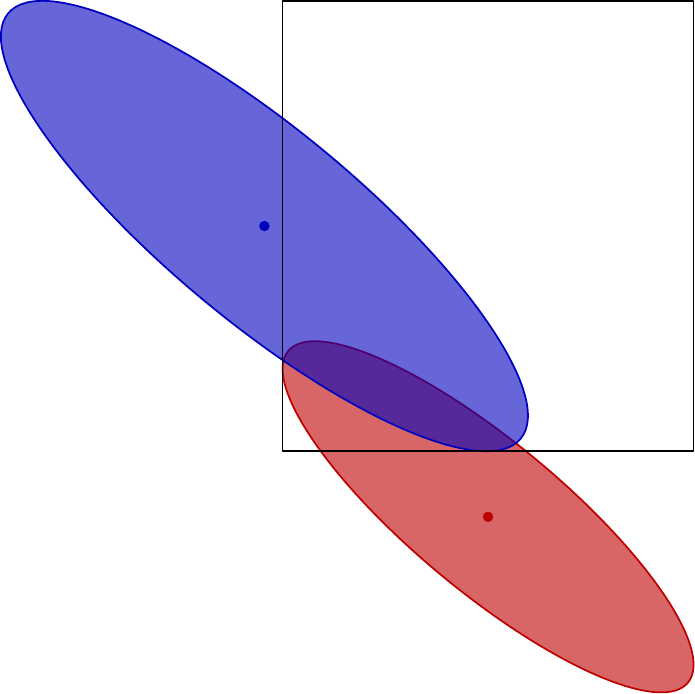}
    \caption{}
    \label{ellipse_static_2_volume_fraction}
  \end{subfigure}  
  \\[1cm]
  \begin{subfigure}{0.2\textwidth}
    \centering
    \includegraphics[scale=0.5]{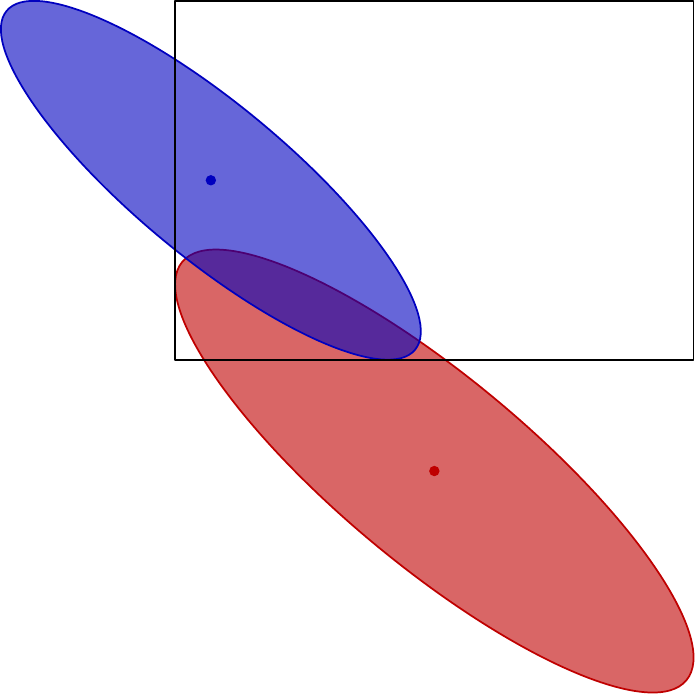}
    \caption{}
    \label{ellipse_static_2_p1c2p2c1}
  \end{subfigure}
  \qquad
  \begin{subfigure}{0.2\textwidth}
    \centering
    \includegraphics[scale=0.5]{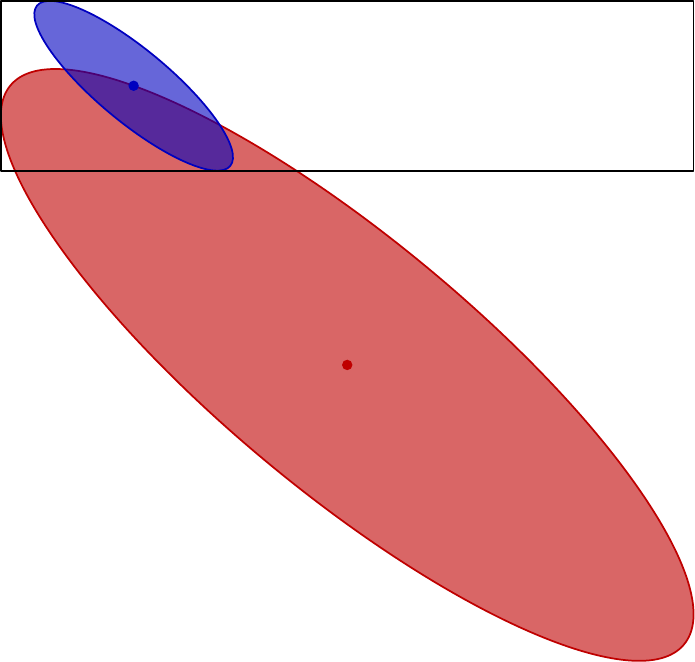}
    \caption{}
    \label{ellipse_static_2_p1c2zero}
  \end{subfigure}
  \qquad
  \begin{subfigure}{0.2\textwidth}
    \centering
    \includegraphics[scale=0.5]{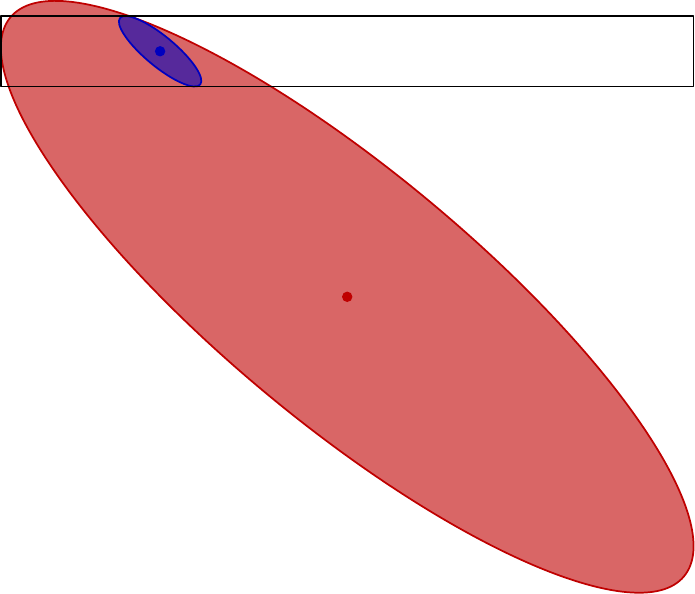}
    \caption{}
    \label{ellipse_static_2_seconddisczero}
  \end{subfigure}
  \qquad
  \begin{subfigure}{0.2\textwidth}
    \centering
    \includegraphics[scale=0.5]{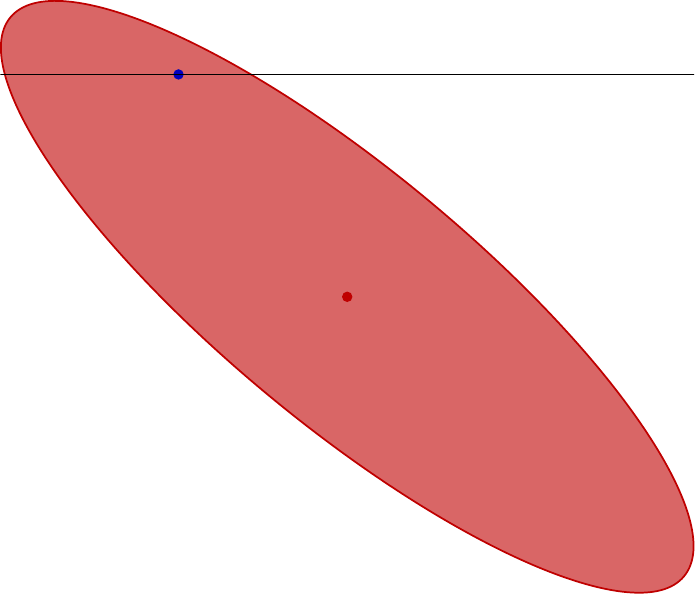}
    \caption{}
    \label{ellipse_static_2_elementary_upper_bound}
  \end{subfigure}  
  \\[1cm]
  \begin{subfigure}{\textwidth}
    \centering
    \includegraphics{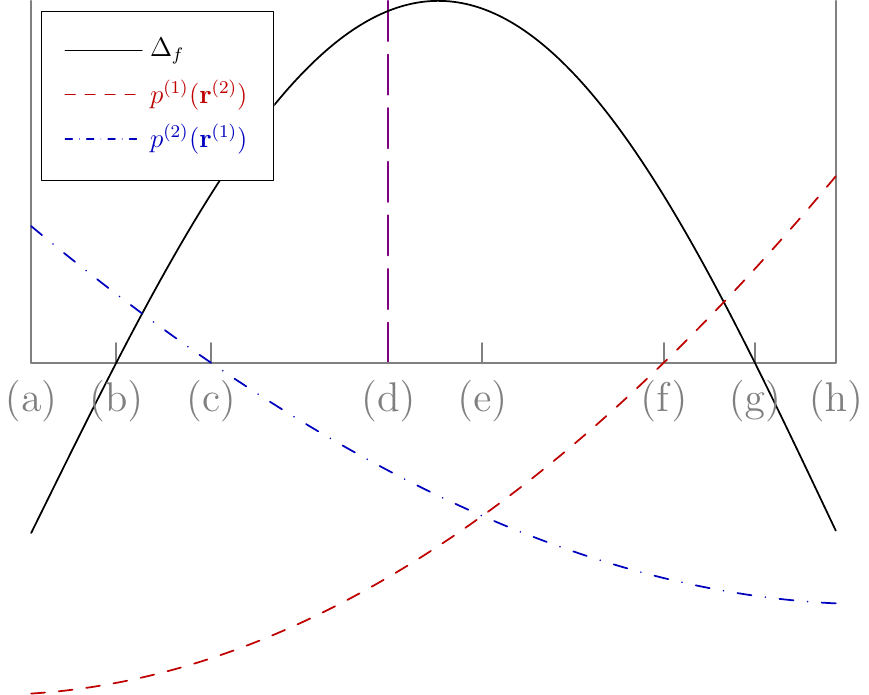}
    \caption{}
    \label{discriminant}
  \end{subfigure}  
  \caption{\emph{The rectangle $\F_{f,e}$ (outlined in black) and the sets $\cE_f^{(1)}$ (in red) and $\cE_f^{(2)}$ (in blue) at test values (a) $f=f_{e,l}\approx0.794$; (b) $f \approx 0.795$ (where $\Delta_f = 0)$; (c) $f \approx 0.797$ (where $p^{(2)}(\mathbf{r}^{(1)}) = 0$); (d) $f = f^{(1)} = 0.80$; (e) $f \approx 0.802$ (intersection of $p^{(1)}(\mathbf{r}^{(2)})$ and $p^{(2)}(\mathbf{r}^{(1)})$); (f) $f \approx 0.805$ (where $p^{(1)}(\mathbf{r}^{(2)}) = 0$); (g) $f \approx 0.806$ (where $\Delta_f = 0$); (h) $f = f_{e,u} \approx 0.808$.  (i) A plot of $\Delta_f$ (black solid line), $p_f^{(1)}(\mathbf{r}^{(2)})$ (red dashed line), and $p_f^{(2)}(\mathbf{r}^{(1)})$ (blue dash-dotted line) for $f \in \A_e = [f_{e,l},f_{e,u}]$ (the horizontal gray line is the $f-$axis).  The parameters used to create this figure are the same as those in Figure \ref{elementaryboundsfigure}.  In this case we only recover the elementary bounds $0.794 \le f^{(1)} \le 0.808$.}}  \label{ellipses_fig}
\end{figure}

In Figures \ref{ellipse_static_2_elementary_lower_bound}-\ref{ellipse_static_2_elementary_upper_bound} we plot the sets $\cE_f^{(1)}$ (red) and $\cE_f^{(2)}$ (blue) at various values of $f \in \A_e = [f_{e,l}, f_{e,u}]$; the centers of each ellipse are indicated by dots.  The black box is the boundary of the set $\F_{f,e}$, defined by the inequalities \eqref{xbounds} and \eqref{ybounds}.  Note that $\partial\cE_f^{(1)}$ is tangent to the vertical segments of the black box and $\partial\cE_f^{(2)}$ is tangent to the horizontal segments, as remarked after Lemma \ref{boundingbox}.  In particular, at $f = f_{e,l}$ (Figure \ref{ellipse_static_2_elementary_lower_bound}), $\cE_{f_{e,l}}^{(1)}$ is a point (represented by the red dot); at $f = f_{e,u}$ (Figure \ref{ellipse_static_2_elementary_upper_bound}), $\cE_{f_{e,u}}^{(2)}$ is a point (represented by the blue dot).  In Figure \ref{discriminant} we plot $\Delta_f$ (solid black line), $p_f^{(1)}(\mathbf{r}^{(2)})$ (red dashed line), and $p_f^{(2)}(\mathbf{r}^{(1)})$ (blue dash-dotted line) over the interval $\A_e$.  The true volume fraction is represented by the magenta dashed line and the horizontal gray line represents the $f-$axis.  Figure \ref{discriminant} shows that each $f \in \A_e$ is admissible; when $\Delta_f < 0$, we have either $p^{(2)}(\mathbf{r}^{(1)}) \ge 0$ and $p^{(1)}(\mathbf{r}^{(2)}) < 0$ (so $\cE^{(1)}_f \subset \cE^{(2)}_f$) or $p^{(1)}(\mathbf{r}^{(2)}) \ge 0$ and $p^{(2)}(\mathbf{r}^{(1)}) < 0$ (so $\cE^{(2)}_f \subset \cE^{(1)}_f$).  Thus for each $f \in \A_e$ the set $\F_f = \cE_f$ is nonempty and we conclude that $\A = \A_e$; in this example the bounds computed using the ellipses are no better than the elementary bounds.

\section{Additional Null Lagrangians in Two Dimensions}\label{section_additional_null_Lagrangians}

\subsection{Improved Elementary Bounds} \label{section_improved_elementary_bounds}
In two dimensions we can include information from the null Lagrangians $\lla\E_1\cdot R_{\perp} \E_2\rra$ and $\lla\J_1\cdot R_{\perp} \J_2\rra$--see \eqref{nullLagrangians2-D}.  The details presented below are similar in nature to those in the previous two sections.  

For arbitrary vectors $\mathbf{c}^{(\alpha)}, \mathbf{d}^{(\alpha)}$ in $\mathbb{R}^2$ and for $\alpha = 1, 2$ we define
\begin{equation}\label{h}
  \ha(\x;\mathbf{c}^{(\alpha)}, \mathbf{d}^{(\alpha)}) := \displaystyle\sum_{m=1}^{2} c^{(\alpha)}_m \left(\E^{(\alpha)}_m(\x) - \dfrac{\ca(\x)}{\fa}\langle\E_m^{(\alpha)}\rangle\right) + \displaystyle\sum_{n=1}^{2} d^{(\alpha)}_n \left(R_{\perp}\E^{(\alpha)}_n(\x) - \dfrac{\ca(\x)}{\fa}\langle R_{\perp} \E^{(\alpha)}_n\rangle\right) .
\end{equation}
Note that $\langle\ha\rangle = 0$ and $\langle\ha\cdot\ha\rangle \ge 0$ for all $\mathbf{c}^{(\alpha)}, \mathbf{d}^{(\alpha)} \in \mathbb{R}^2$.   A computation shows that this inequality is equivalent to 
\begin{equation}\label{2-Dstar}
  \mathbf{C}^{(\alpha)}\cdot \Ma \mathbf{C}^{(\alpha)} \ge 0 \text{ for all } \mathbf{C}^{(\alpha)} \in \mathbb{R}^4 ,
\end{equation}
where we have written 
\[ \mathbf{C}^{(\alpha)} = \begin{bmatrix} \mathbf{c}^{(\alpha)} \\ \mathbf{d}^{(\alpha)} \end{bmatrix}\]
for arbitrary $\mathbf{c}^{(\alpha)}, \mathbf{d}^{(\alpha)} \in \mathbb{R}^2$.  
For $\alpha = 1, 2$ the $4 \times 4$ matrix $\Ma$ is 
\begin{equation}\label{Malpha}
\Ma = \begin{bmatrix} \Sa & \Ta \\ - \Ta & \Sa \end{bmatrix} ,
\end{equation}
where
\begin{align}
\Ta &= \begin{bmatrix} \Ba_{11} - \dfrac{1}{\fa}\langle\E_1^{(\alpha)}\rangle \cdot \Rp \langle\E_1^{(\alpha)}\rangle & \Ba_{12} - \dfrac{1}{\fa}\langle\E_1^{(\alpha)}\rangle \cdot \Rp \langle\E_2^{(\alpha)}\rangle \\ \Ba_{21} - \dfrac{1}{\fa}\langle\E_2^{(\alpha)}\rangle \cdot \Rp \langle\E_1^{(\alpha)}\rangle & \Ba_{22} - \dfrac{1}{\fa}\langle\E_2^{(\alpha)}\rangle \cdot \Rp \langle\E_2^{(\alpha)}\rangle \end{bmatrix} , \\
\Ba_{mn} &= \langle \ca \E_m \cdot R_{\perp} \E_n \rangle = \langle \E_m^{(\alpha)}\cdot \Rp \E_n^{(\alpha)}\rangle \quad \text{ (for } m, n = 1, 2 \text{)} ,
\end{align}
and $R_{\perp}$ and $\Sa$ are as before (see \eqref{Rperp} and \eqref{Salpha}, respectively).  
Note that $\Ta_{11} = \Ta_{22} = 0$ for $\alpha = 1, 2$.  Also, $\Ta_{12} = -\Ta_{21}$ since $\Rp^T = -\Rp$.

For $f \in \A_e$ we define
\begin{equation}\label{finalMalpha}
  \Ma_f(x,y) := \begin{bmatrix} \Sa_f(x,y) & \Ta_f \\ -\Ta_f & \Sa_f(x,y) \end{bmatrix},
\end{equation}
where $\Sa_f(x,y)$ is defined in \eqref{Sxy},
\begin{equation*}
  \Ta_f = -\left[\Ta_f\right]^T = \begin{bmatrix} 0 & \sqrt{\tau_f^{(\alpha)}} \\ -\sqrt{\tau_f^{(\alpha)}} & 0 \end{bmatrix}
\end{equation*}
where 
\begin{equation}\label{tau}
  \tau^{(\alpha)}_f := \det \Ta_f = \left[\Ba_{12} - \dfrac{1}{f_*}\langle\E_1^{(\alpha)}\rangle \cdot \Rp \langle\E_2^{(\alpha)}\rangle\right]^2 \ge 0,
\end{equation}
and $f_*$ is defined in \eqref{fstar}.
Since $\Sa_f$ is symmetric for all $(x,y) \in \mathbb{R}^2$ and $\Ta_f$ is anti-symmetric, $\Ma_f(x,y)$ is symmetric for $f \in \A_e$ and all $(x,y) \in \mathbb{R}^2$. 

We apply the splitting method to $\lla \E_1 \cdot \Rp \E_2 \rra$ and $\lla \J_1 \cdot \Rp \J_2 \rra$ (see \eqref{splitting}) and obtain the system
\begin{equation}
  \begin{bmatrix} 1 & 1 \\ \left|\sigma^{(1)}\right|^2 & \left|\sigma^{(2)}\right|^2 \end{bmatrix} \begin{bmatrix} B^{(1)}_{12} \\[0.1cm] B^{(2)}_{12} \end{bmatrix} = \begin{bmatrix} \lla \E_1 \cdot \Rp \E_2 \rra \\[0.1cm] \lla \J_1 \cdot \Rp \J_2 \rra\end{bmatrix} .
\end{equation}
As long as $\left|\sigma^{(1)}\right| \ne \left|\sigma^{(2)}\right|$, we can solve this system for $B^{(1)}_{12}$ and $B^{(2)}_{12}$; in that case
\begin{equation}\label{B12}
  \begin{bmatrix} B^{(1)}_{12} \\[0.1cm] B^{(2)}_{12} \end{bmatrix} = \dfrac{1}{\left|\sigma^{(2)}\right|^2 - \left|\sigma^{(1)}\right|^2}\begin{bmatrix} \left|\sigma^{(2)}\right|^2 \langle \E_1 \cdot \Rp \E_2 \rangle -  \langle \J_1 \cdot \Rp \J_2 \rangle \\[0.1cm] -\left|\sigma^{(1)}\right|^2 \langle \E_1 \cdot \Rp \E_2 \rangle +  \langle \J_1 \cdot \Rp \J_2 \rangle\end{bmatrix} 
\end{equation}
and $B^{(1)}_{12}$ and $B^{(2)}_{12}$ (hence $T_f^{(1)}$ and $T_f^{(2)}$) are known.  

\begin{feasibleregion2}\label{feasibleregion2}
  For $f \in \A_e$ we set
  \begin{equation*}
    \tFa_f := \{(x,y) \in \mathbb{R}^2 : \Ma_f(x,y) \text{ is positive-semidefinite}\} .
  \end{equation*}
  Then the set $\tF_f := \tFo_f \cap \tFt_f$ is called the \emph{restricted feasible region associated with $f$}.  In addition, the set $\tA := \{f \in \A_e : \tF_f \ne \emptyset\}$ is called the \emph{restricted set of admissible test values}.  
\end{feasibleregion2}
To find the set $\tA$, we need to find the values of $f \in \A_e$ such that there is at least one point $(x,y) \in \mathbb{R}^2$ at which both $M^{(1)}_f(x,y)$ and $M^{(2)}_f(x,y)$ are simultaneously positive-semidefinite.  We will see that $\tA \subseteq \A$, so the bounds in this section are in general tighter than those in the previous sections.  

\begin{eigenvaluelemma}\label{eigenvaluelemma}
  Assume $\beta \ne 0$ and $\left|\sigma^{(1)}\right| \ne \left|\sigma^{(2)}\right|$.  Then for $f \in \A_e$ and $\alpha = 1, 2$, the matrix $\Ma_f(x,y)$ defined in \eqref{finalMalpha} is positive-semidefinite if and only if $\pa_f(x,y) = \det \Sa_f(x,y) \ge \tau^{(\alpha)}_f.$  
\end{eigenvaluelemma}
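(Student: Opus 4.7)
The plan is to reduce positive-semidefiniteness of the $4\times 4$ real symmetric matrix $\Ma_f(x,y)$ to that of a $2\times 2$ Hermitian matrix, and then apply the elementary trace/determinant criterion. The bridge is the standard identification: any real block matrix of the form $\begin{bmatrix} X & -Y \\ Y & X \end{bmatrix}$ with $X$ real symmetric and $Y$ real antisymmetric corresponds, via $\mathbb{C}^n \ni \mathbf{u} + \ii \mathbf{v} \leftrightarrow (\mathbf{u},\mathbf{v})^T \in \mathbb{R}^{2n}$, to the Hermitian complex matrix $X + \ii Y$; under this correspondence positive-semidefiniteness is preserved, and in fact each eigenvalue of $X + \ii Y$ appears with doubled multiplicity in the real block form.

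First I would match $\Ma_f(x,y)$ to this block form with $X = \Sa_f(x,y)$ (symmetric) and $Y = -\Ta_f$ (antisymmetric, as noted after \eqref{finalMalpha}). This shows that $\Ma_f(x,y)$ is positive-semidefinite if and only if the $2\times 2$ Hermitian matrix
\[
Z^{(\alpha)}_f(x,y) := \Sa_f(x,y) - \ii \Ta_f
\]
is positive-semidefinite. Writing $\Sa_f(x,y) = \begin{bmatrix} a & b \\ b & c \end{bmatrix}$ and using $\Ta_f = \sqrt{\tau^{(\alpha)}_f}\,\Rp$, a direct computation gives
\[
Z^{(\alpha)}_f(x,y) = \begin{bmatrix} a & b - \ii \sqrt{\tau^{(\alpha)}_f} \\ b + \ii \sqrt{\tau^{(\alpha)}_f} & c \end{bmatrix},
\]
so $\det Z^{(\alpha)}_f = ac - b^2 - \tau^{(\alpha)}_f = \pa_f(x,y) - \tau^{(\alpha)}_f$ and $\tr Z^{(\alpha)}_f = \tr \Sa_f(x,y)$.

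Invoking the standard criterion that a $2\times 2$ Hermitian matrix is positive-semidefinite if and only if both its trace and its determinant are nonnegative yields
\[
\Ma_f(x,y) \text{ is PSD} \iff \tr \Sa_f(x,y) \ge 0 \text{ and } \pa_f(x,y) \ge \tau^{(\alpha)}_f.
\]
The last step is to discharge the trace condition under the hypothesis $f \in \A_e$. One checks directly from \eqref{Sxy} that $\tr \Sa_f(x,y)$ is independent of $(x,y)$: explicitly, $\tr S^{(1)}_f = \etaone - (\|\langle\Eoo\rangle\|^2 + \|\langle\Eto\rangle\|^2)/f$ and $\tr S^{(2)}_f = \etatwo - (\|\langle\Eot\rangle\|^2 + \|\langle\Ett\rangle\|^2)/(1-f)$. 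By the definitions of $f_{e,l}$ and $f_{e,u}$ in \eqref{lowere}--\eqref{uppere}, both traces are nonnegative precisely when $f \in [f_{e,l},f_{e,u}] = \A_e$, which is the standing hypothesis. The claimed equivalence $\Ma_f(x,y)$ PSD $\iff \pa_f(x,y) \ge \tau^{(\alpha)}_f$ follows.

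The main obstacle will be bookkeeping the signs in the real/complex correspondence: one must verify that the $-\Ta_f$ block sitting in the $(2,1)$ position of $\Ma_f$ forces the associated Hermitian matrix to be $\Sa_f - \ii\Ta_f$ rather than $\Sa_f + \ii\Ta_f$. Because $\tau^{(\alpha)}_f = \det \Ta_f$ is insensitive to this sign, the choice does not affect the final criterion, but the argument must be presented consistently. The remaining steps are routine $2\times 2$ linear algebra together with a direct comparison with the formulas defining $\A_e$; in particular, the fact that the trace condition comes for free under $f \in \A_e$ is essential, since otherwise the lemma would require an additional hypothesis.
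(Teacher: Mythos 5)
Your proof is correct and is in substance the paper's own argument: both reduce positive-semidefiniteness of $\Ma_f(x,y)$ to the two conditions $\tr \Sa_f \ge 0$ (which holds automatically for $f\in\A_e$ by \eqref{lowere}--\eqref{uppere}) and $\det\Sa_f - \det\Ta_f \ge 0$. The only difference is presentational: the paper simply asserts the doubled eigenvalues \eqref{eigenvalues} of the $4\times 4$ matrix and checks their signs, whereas your real-block-to-Hermitian correspondence $\Ma_f \leftrightarrow \Sa_f - \ii\,\Ta_f$ actually derives that eigenvalue formula before applying the equivalent trace/determinant criterion.
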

\begin{proof}
Recall that a symmetric matrix is positive-semidefinite if and only if all of its eigenvalues are nonnegative.  For $\alpha = 1, 2$ the eigenvalues  of $\Ma_f$, each with algebraic multiplicity 2, are
\begin{equation}\label{eigenvalues}
  \lambda^{(\alpha)}_{f,\pm}(x,y) = \frac{1}{2}\left\{\tr \Sa_f \pm \sqrt{\left[\tr\Sa_f\right]^2 - 4\left[\det\Sa_f-\det\Ta_f\right]}\right\}.
\end{equation}
(We have suppressed the dependence on $x$ and $y$ on the right-hand side of the above expression).   

By \eqref{Sxy}, \eqref{xs}, and \eqref{ys}, $\tr\Sa_f(x,y)$ is independent of $x$ and $y$ and is nonnegative if and only if $f \in \A_e$.  We note that the expression under the square root in \eqref{eigenvalues} must be nonnegative for all points $(x,y) \in \mathbb{R}^2$ and all $f \in \A_e$ since $M^{(\alpha)}_f(x,y)$ is symmetric for all such values of $x,y,$ and $f$.  

The previous paragraph implies that the eigenvalues $\lambda_{f,\pm}^{(\alpha)}(x,y)$ will be nonnegative for those points $(x,y) \in \mathbb{R}^2$ and those values of $f \in \A_e$ for which 
\begin{equation*}
  4\left[\det\Sa_f(x,y)-\det\Ta_f\right] \ge 0 \Leftrightarrow \det\Sa_f(x,y) \ge \tau^{(\alpha)}_f.
\end{equation*}
\end{proof}

Now $\pa_f \ge \tau^{(\alpha)}_f$ if and only if $\tpa_f \ge 0$, where $\tpa_f := \pa_f - \tau^{(\alpha)}_f$.  Using calculus we find
\begin{equation}\label{ptildemax}
  \tpa_{f,\max} := \displaystyle\max_{(x,y) \in \mathbb{R}^2} \tpa_f(x,y) 
  			  = \frac{1}{4 f_*^2}\left[\avgnormvplusa f_* - \normavgvplusa\right]\cdot\left[\avgnormvminusa f_* - \normavgvminusa \right]
\end{equation}
where $f_*$ is defined in \eqref{star} and 
\begin{equation}\label{vplusminusa}
  \vplusminusa := \ca\left(\E_1 \pm \Rp \E_2\right) = \E^{(\alpha)}_1 \pm \Rp \E^{(\alpha)}_2.
\end{equation}
Note that $\langle\vplusminusa\rangle = \langle\E^{(\alpha)}_1\rangle \pm \Rp \langle\E^{(\alpha)}_2\rangle$ is known (by the statement following \eqref{eq:E1E2}) and $\langle\vplusminusa\cdot\vplusminusa\rangle = \eta^{(\alpha)} \pm 2B_{12}^{(\alpha)}$ is known if and only if $\left|\sigma^{(1)}\right| \ne \left|\sigma^{(2)}\right|$ (by \eqref{constants} and \eqref{B12}).  For now we will assume that $\vplusminusa \not\equiv 0$ and $\eta^{(\alpha)} \ne 0$ (physically, this means that we assume that the real and imaginary parts of the electric field are nonperpendicular and nonzero in both phases).  We will show that $\tpa_{f,\max} < 0$ on a subset of $\A_e$; such values of $f$ are not admissible by Lemma \ref{eigenvaluelemma}.

Now $\widetilde{p}^{(1)}_{f,\max} \ge 0$ if and only if 
\begin{align}
  &f \ge \widetilde{f}_{e,l} := \max\left\{\frac{\normavgvminusone}{\avgnormvminusone},\frac{\normavgvplusone}{\avgnormvplusone}\right\}\label{tildefel} \\
  \text{or } &f \le Q^{(1)} := \min\left\{\frac{\normavgvminusone}{\avgnormvminusone},\frac{\normavgvplusone}{\avgnormvplusone}\right\} . \label{Qone}
\end{align}
A computation shows that $Q^{(1)} \le f_{e,l} \le \widetilde{f}_{e,l}$ and so the inequality in \eqref{Qone} will not be satisfied for all $f \ge f_{e,l}$ and can safely be ignored.  Moreover, we will have the chain of equalities $Q^{(1)} = f_{e,l} = \widetilde{f}_{e,l}$ if and only if 
\begin{equation}\label{eq:v1_condition}
  B_{12}^{(1)} f_{e,l} = \langle\Eoo\rangle\cdot\Rp\langle\Eto\rangle.
\end{equation}
If $\E^{(1)}$ is a constant, then \eqref{eq:v1_condition} becomes $f_{e,l} = f^{(1)}$, which is consistent with our work in Section \ref{section_elementary_bounds}.  We also note that $\widetilde{f}_{e,l}$ can be rewritten as
\begin{equation}\label{eq:alternative_aelb}
  \widetilde{f}_{e,l} = \begin{cases}  \dfrac{\normavgvplusone}{\avgnormvplusone} & \text{if } B_{12}^{(1)} f_{e,l} \le \langle\Eoo\rangle\cdot\Rp\langle\Eto\rangle \\[0.5cm] \dfrac{\normavgvminusone}{\avgnormvminusone} & \text{if } B_{12}^{(1)} f_{e,l} > \langle\Eoo\rangle\cdot\Rp\langle\Eto\rangle. \end{cases} 
\end{equation}
The above computations are summarized in Figure \ref{parabolas}, which is a plot of the functions $\tpa_{f,\max}$ as a function of $f$.  The function $\widetilde{p}^{(1)}_{f,\max}$ is plotted as a red solid curve.  If \eqref{eq:v1_condition} does not hold, its zeros $Q^{(1)}$ and $\widetilde{f}_{e,l}$ are below and above the elementary lower bound $f_{e,l}$, respectively.  Thus all values of $f \in [f_{e,l},\widetilde{f}_{e,l})$ are not admissible, giving us the improved elementary lower bound $\widetilde{f}_{e,l} \le \fone$.  If \eqref{eq:v1_condition} holds, then $Q^{(1)} = \widetilde{f}_{e,l} = f_{e,l}$, and we do not obtain an improved elementary lower bound.  In Figure \ref{parabolas}, $f_{e,l}$ is indicated with the left gray vertical line while $\widetilde{f}_{e,l}$ is indicated by the left black vertical line.

Similarly, $\widetilde{p}^{(2)}_{f,\max} \ge 0$ if and only if
\begin{align}
  &f \le \widetilde{f}_{e,u}:= \min\left\{1-\frac{\normavgvminustwo}{\avgnormvminustwo},1-\frac{\normavgvplustwo}{\avgnormvplustwo}\right\} \label{tildefeu} \\
  \text{or } &f \ge Q^{(2)}:= \max\left\{1-\frac{\normavgvminustwo}{\avgnormvminustwo},1-\frac{\normavgvplustwo}{\avgnormvplustwo}\right\}. \label{Qtwo}
\end{align}
Again a computation shows that $\widetilde{f}_{e,u} \le f_{e,u} \le Q^{(2)}$.  We will have the chain of equalities $\widetilde{f}_{e,u} = f_{e,u} = Q^{(2)}$ if and only if 
\begin{equation}\label{eq:v2_condition}
B_{12}^{(2)}(1-f_{e,u}) = \langle\Eot\rangle\cdot\Rp\langle\Ett\rangle.
\end{equation}  
If $\E^{(2)}$ is a constant, then \eqref{eq:v2_condition} becomes $f_{e,u} = f^{(1)}$, which is consistent with our work in Section \ref{section_elementary_bounds}.  We also note that $\widetilde{f}_{e,u}$ can be rewritten as
\begin{equation}\label{eq:alternative_aeub}
  \widetilde{f}_{e,u} = \begin{cases}  1-\dfrac{\normavgvplustwo}{\avgnormvplustwo} & \text{if } B_{12}^{(2)}(1-f_{e,u}) \le \langle\Eot\rangle\cdot\Rp\langle\Ett\rangle \\[0.5cm] 1-\dfrac{\normavgvminustwo}{\avgnormvminustwo} & \text{if } B_{12}^{(2)}(1-f_{e,u}) > \langle\Eot\rangle\cdot\Rp\langle\Ett\rangle. \end{cases} 
\end{equation}
The function $\widetilde{p}^{(2)}_{f,\max}$ is plotted as a blue dashed curve in Figure \ref{parabolas}.  If \eqref{eq:v2_condition} does not hold the values of $f \in (\widetilde{f}_{e,u},f_{e,u}]$ are not admissible so we obtain the improved elementary upper bound $f \le \widetilde{f}_{e,u}$; if \eqref{eq:v2_condition} holds then $Q^{(2)} = \widetilde{f}_{e,u} = f_{e,u}$ and we do not obtain an improved elementary upper bound.  In Figure \ref{parabolas}, $f_{e,u}$ and $\widetilde{f}_{e,u}$ are indicated by the right gray and black vertical lines, respectively.  

Finally, we can show that $\widetilde{f}_{e,l} \le \widetilde{f}_{e,u}$ and provide a much simpler derivation of the improved elementary bounds as follows.  We begin by noting that
\begin{equation}
  \lla\left\|\vplusminusa - \frac{\ca}{\fa}\langle\vplusminusa\rangle\right\|^2\rra \ge 0, 
\end{equation}
which is equivalent to 
\begin{equation}\label{Caucyv}
  \normavgvplusminusa \le \fa\avgnormvplusminusa \Leftrightarrow \frac{\normavgvplusminusa}{\avgnormvplusminusa} \le \fa
\end{equation}
with equality if and only if $\vplusminusa$ is a (nonzero) constant.  This implies that 
\begin{equation*}
  \widetilde{f}_{e,l} \le f^{(1)} \quad \text{ and } \quad  \widetilde{f}_{e,u} \ge 1-f^{(2)} = f^{(1)}.
\end{equation*}
The first inequality above will be satisfied as an equality if and only if $\vplusone$ or $\vminusone$ is a (nonzero) constant; the second inequality above will be satisfied as an equality if and only if $\vplustwo$ or $\vminustwo$ is a (nonzero) constant.
\begin{restricted_elementary_admissible}\label{restricted_elementary_admissible}
  The set $\widetilde{\A}_e := \{f \in \A_e : \widetilde{f}_{e,l} \le f \le \widetilde{f}_{e,u} \}$ is called the \emph{restricted elementary set of admissible test values}.  
\end{restricted_elementary_admissible}
The set $\widetilde{\A}_e$ is highlighted by the darkened interval in Figure \ref{parabolas}, while the true volume fraction $f^{(1)}$ is indicated by the magenta vertical dashed line.  We note that $\widetilde{\A}_e \subseteq \A_e$ with equality if and only if \eqref{eq:v1_condition} and \eqref{eq:v2_condition} hold.
\begin{figure}[!hbtp]
  \centering
  \begin{subfigure}{0.45\textwidth}
    \centering
    \includegraphics[scale=0.7]{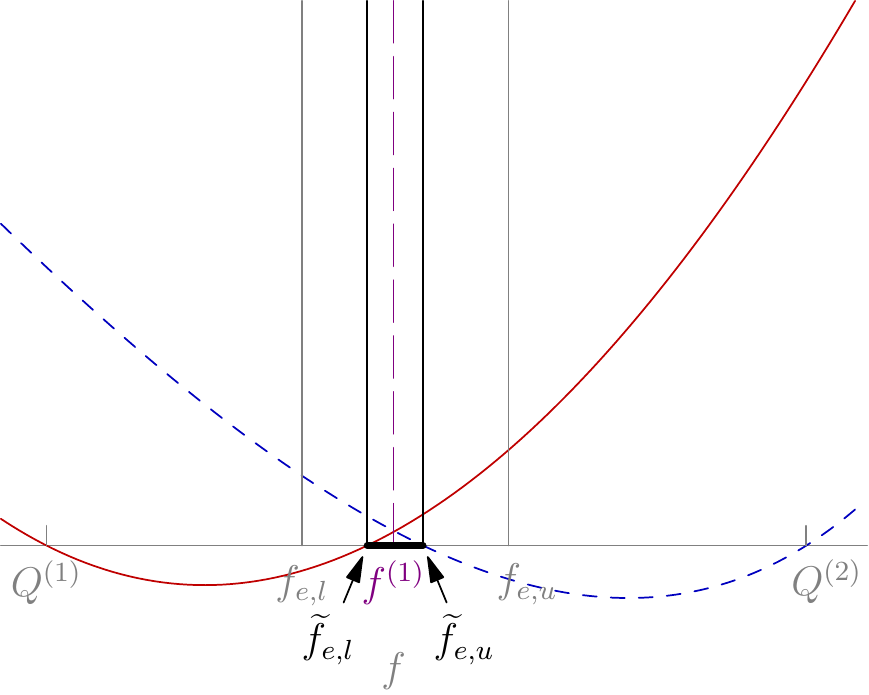}
    \caption{}
    \label{quadratics_subfigure}
  \end{subfigure}
  \qquad
  \begin{subfigure}{0.45\textwidth}
    \centering
    \includegraphics[scale=0.7]{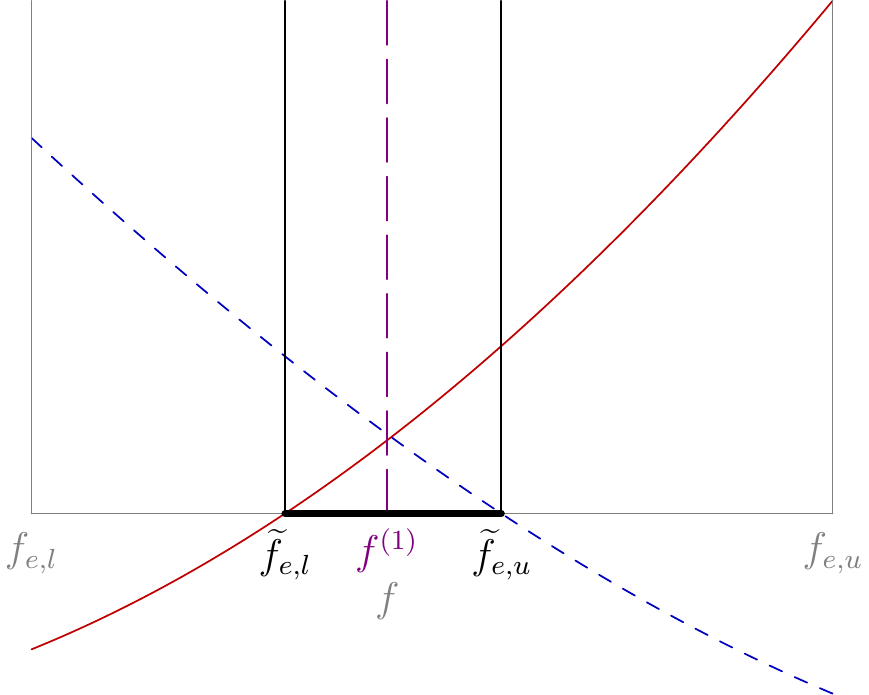}
    \caption{}
    \label{quadratics_zoom}
  \end{subfigure}
  \caption{\emph{(a) A plot of $\widetilde{p}^{(1)}_{f,\max}$ (red solid curve) and $\widetilde{p}^{(2)}_{f,\max}$ (blue dashed curve)--the horizontal gray line represents the $f-$axis.  The geometry and parameters used to create these plots are the same as those used to create Figure \ref{elementaryboundsfigure}.  (b) A zoomed-in version of (a)--here we plot the functions over the interval $[f_{e,l},f_{e,u}]$. In both figures the set $\widetilde{A}_e = [\widetilde{f}_{e,l},\widetilde{f}_{e,u}]$ is highlighted by the darkened interval.  Some relevant numbers are $f_{e,l} \approx 0.794, f_{e,u} \approx 0.808, \widetilde{f}_{e,l} \approx 0.798, \widetilde{f}_{e,u} \approx 0.802, Q^{(1)} \approx 0.776, Q^{(2)} \approx 0.828,$ and $f^{(1)} = 0.8$.  So we obtain the better bounds $0.798 \le f^{(1)} \le 0.802$.}}
  \label{parabolas}
\end{figure}

We have thus proven the following theorem.
\begin{improved_elementary_bounds_theorem}[Improved Elementary Bounds]\label{improved_elementary_bounds_theorem}
  Suppose $\beta \ne 0$, $\left|\sigma^{(1)}\right| \ne \left|\sigma^{(2)}\right|$, $\mathbf{v}_{\pm}^{(\alpha)} \not\equiv 0$, and $\eta^{(\alpha)} \ne 0$ for $\alpha = 1, 2$.  Then the volume fraction $\fone = \langle\chione\rangle$ satisfies the bounds $\widetilde{f}_{e,l} \le \fone \le \widetilde{f}_{e,u}$ where $\widetilde{f}_{e,l}$ and $\widetilde{f}_{e,u}$ are defined in \eqref{tildefel} and \eqref{tildefeu}, respectively (also see \eqref{eq:alternative_aelb} and \eqref{eq:alternative_aeub}).  
Moreover, the lower bound is satisfied as an equality (i.e. $\widetilde{f}_{e,l} = \fone$) if and only if $\mathbf{v}_+^{(1)}$ or $\mathbf{v}_-^{(1)}$ is a nonzero constant while the upper bound is satisfied as an equality (i.e. $\widetilde{f}_{e,u} = \fone$) if and only if $\mathbf{v}_+^{(2)}$ or $\mathbf{v}_-^{(2)}$ is a nonzero constant.
Finally, these are tighter bounds than those discussed in Theorem \ref{elementary_bounds_theorem}, i.e. $f_{e,l} \le \widetilde{f}_{e,l}$ with equality if and only if \eqref{eq:v1_condition} holds and $\widetilde{f}_{e,u} \le f_{e,u}$ with equality if and only if \eqref{eq:v2_condition} holds.
\end{improved_elementary_bounds_theorem}

\subsection{Example of the Improved Elementary Bounds Being Attained}\label{subsection:Attainability_Conditions}
  We now consider a configuration of concentric disks for which the improved elementary lower bound from Section \ref{section_improved_elementary_bounds} gives the exact volume fraction while the original elementary lower bound from Section \ref{section_elementary_bounds} only gives a lower bound on the volume fraction.  Thus for this example we will see that
  \[f_{e,l} < \widetilde{f}_{e,l} = f^{(1)} < \widetilde{f}_{e,u} < f_{e,u}.\]
  We denote the radii and conductivities of the inner disk (core) and outer annulus (shell) by $R_1$ and $R_2$ and $\sigma^{(1)}$ and $\sigma^{(2)}$, respectively.  Throughout this section we will take $z = x+\ii y = r\ee^{\ii\theta}$; the complex conjugate of $z$ will be denoted by $\overline{z}$ and is given by $\overline{z} = x-\ii y = r\ee^{-\ii\theta}$.  We note that the condition $\vplusa$ being constant is equivalent to the potential in phase $\alpha$ being the sum of function linear in $z$ plus a function $g(\overline{z})$ or conversely $\vminusa$ being constant is equivalent to the potential in phase $\alpha$ being a function linear in $\overline{z}$ plus a function $g(z)$.

We will take the Dirichlet boundary condition 
\begin{equation}
  V(R_2,\theta) = V_0(R_2,\theta) = \left(a R_2 + \frac{b}{R_2}\right)\ee^{\ii\theta} + \left(c R_2^2 + \frac{d}{R_2^2}\right)\ee^{-2\ii\theta},
\end{equation}
where
\begin{equation}\label{eq:core_shell_coefficients}
  a = \frac{\sigma^{(1)}+\sigma^{(2)}}{2\sigma^{(2)}}; \quad b = -\frac{R_1^2[\sigma^{(1)}+\sigma^{(2)}]}{2\sigma^{(2)}}; \quad c = \frac{k[\sigma^{(1)}+\sigma^{(2)}]}{2\sigma^{(2)}}; \quad d = -\frac{kR_1^4[\sigma^{(1)}-\sigma^{(2)}]}{2\sigma^{(2)}}.
\end{equation}
and $k \in \mathbb{R}$ (entering \eqref{eq:core_shell_coefficients}) is a given constant.
The potential in the core (for $0 < r < R_1$) is then given by 
\begin{equation}\label{eq:core_potential}
  V^{(1)}(z,\overline{z}) = z + k(\overline{z})^2.
\end{equation}
The potential in the shell ($R_1 < r < R_2$) can be found by using the continuity of the potential $V$ and the current $-\sigma \nabla V \cdot \mathbf{n}$ across the boundary at $r = R_1$; in particular we find 
\begin{equation}\label{eq:shell_potential}
  V^{(2)}(z,\overline{z}) = a z + \frac{b}{\overline{z}} + c(\overline{z})^2 + \frac{d}{z^2},
\end{equation}
where $a, b, c,$ and $d$ are given in \eqref{eq:core_shell_coefficients}.
Let 
\[ \xhat = \begin{bmatrix} 1 \\ 0 \end{bmatrix} \quad \text{and} \quad \yhat = \begin{bmatrix} 0 \\ 1 \end{bmatrix} \]
be the standard orthonormal basis for $\mathbb{R}^2$.  Then, since $\E = -\nabla V$, the electric field in each phase is given by 
\begin{equation}\label{eq:electric_field_core_shell}
  \begin{aligned}
    \E^{(1)} &= -\left(1+2k\overline{z}\right)\xhat - \ii\left(1-2 k \overline{z}\right)\yhat \\
    \E^{(2)} &= -\left[a-\frac{b}{(\overline{z})^2}+2c\overline{z} - \frac{2-D}{z^3}\right]\xhat - \ii\left[a+\frac{b}{(\overline{z})^2}-2c\overline{z}-\frac{2-D}{z^3}\right]\yhat.
  \end{aligned}
\end{equation}
We emphasize that neither of these fields is constant; therefore Theorem \ref{elementary_bounds_theorem} implies 
\[f_{e,l} < f^{(1)} < f_{e,u}.\]
In particular
\begin{equation}\label{eq:core_elb}
  f_{e,l} = \left(\frac{1}{1+2k^2R_1^2}\right)\frac{R_1^2}{R_2^2}.
\end{equation}
For $k \ne 0$ this is strictly less than $f^{(1)} = \dfrac{R_1^2}{R_2^2}$.

Recall that $\vplusminusa = \E^{(\alpha)}_1 \pm \Rp \E^{(\alpha)}_2$.  We can compute
\begin{equation}\label{eq:v1pm_core}
  \vplusone = -2\xhat \quad \text{and} \quad \vminusone = 4k\left(-x\xhat + y\yhat\right).
\end{equation}
So $\vplusone$ is a constant.  We note that both fields $\vplusminustwo$ are not uniform.  Theorem \ref{improved_elementary_bounds_theorem} thus implies that $\widetilde{f}_{e,l} = f^{(1)}$ and $f^{(1)} < \widetilde{f}_{e,u}$.

Finally, if $k = 0$ note that \eqref{eq:electric_field_core_shell} implies that $\E^{(1)}= -\xhat -\ii\yhat$ is a constant.  Thus Theorem \ref{elementary_bounds_theorem} implies that $f_{e,l} = f^{(1)}$, which is verified by \eqref{eq:core_elb}.  Additionally  \eqref{eq:v1pm_core} implies that $\vminusone \equiv 0$, so Theorem \ref{improved_elementary_bounds_theorem} implies that $\widetilde{f}_{e,l} = f_{e,l}$.

\subsection{More Sophisticated Bounds}\label{section_tilde_ellipse_bounds}
We now proceed to find improved bounds; the method is very similar to that in Section \ref{section_more_sophisticated_bounds}.
\begin{ellipseregion2}\label{ellipseregion2}
  For $\alpha = 1, 2$ and for $f \in \tA_e$ we define
  \begin{equation*}
    \tEa_f := \{(x,y) \in \mathbb{R}^2 : \pa_f(x,y) \ge \tau^{(\alpha)}_f\} \quad \text{and} \quad \tcE_f := \tEo_f\cap\tEt_f .
  \end{equation*}
\end{ellipseregion2}
Since $\tau^{(\alpha)}_f \ge 0$, Lemma \ref{eigenvaluelemma} implies that $\tEa_f \subseteq \Ea_f$; that is, the elliptic disks in this case will be smaller than those in the previous section (for which $\tau^{(\alpha)}_f \equiv 0$).  For each $f \in \widetilde{\A}_e$ we check to see whether or not $\tcE_f$ is empty.  If $\tcE_f \ne \emptyset$ then $f \in \tA$; if $\tcE_f = \emptyset$ then $f \notin \tA$.  As in Section \ref{section_more_sophisticated_bounds}, we cannot work through everything explicitly due to the complexity of the expressions involved.  However, Lemmas \ref{ellipseslemma}-\ref{twointersectionpoints} (and therefore Theorem \ref{noRperptheorem}) extend immediately; we present their extensions here for completeness.  

\begin{ellipseslemma2}\label{ellipseslemma2}
  Assume that $\beta \ne 0$, $\left|\sigma^{(1)}\right| \ne \left|\sigma^{(2)}\right|$, $\vplusminusa \ne 0$, and $\eta^{(\alpha)} \ne 0$ for $\alpha = 1, 2$.  Then the following properties hold.
  \begin{enumerate}
    \item[(1)] For $f \in (\widetilde{f}_{e,l}, \widetilde{f}_{e,u})$ and $\alpha = 1, 2$, $\tEa_f$ is a closed elliptic disk; its boundary is the ellipse $\partial\tEa_f = \{(x, y) \in \mathbb{R}^2 : \tpa_f(x,y) = 0\};$
    \item[(2)] $\tcE^{(1)}_{\widetilde{f}_{e,l}}$ is a point and $\tcE^{(2)}_{\widetilde{f}_{e,l}}$ is a closed elliptic disk;
    \item[(3)] $\tcE^{(1)}_{\widetilde{f}_{e,u}}$ is a closed elliptic disk and $\tcE^{(2)}_{\widetilde{f}_{e,u}}$ is a point.
  \end{enumerate}
\end{ellipseslemma2}
\begin{proof}
  We simply apply the proof of Lemma \ref{ellipseslemma} to $\tpa_f$.
\end{proof}
\begin{boundingbox2}\label{boundingbox2}
  Suppose $\beta \ne 0$, $\left|\sigma^{(1)}\right| \ne \left|\sigma^{(2)}\right|$, $\vplusminusa \ne 0$, and $\eta^{(\alpha)} \ne 0$ for $\alpha = 1, 2$.  Then for each $f \in \widetilde{\A}_e, \tcE_f \subseteq \F_{f,e}$.
\end{boundingbox2}
\begin{proof}
 For each $f \in \widetilde{\A}_e$, $\tcE_f \subseteq \cE_f$ by Lemma \ref{eigenvaluelemma}; since $\cE_f \subseteq \F_{f,e}$ for each $f \in \A_e \supseteq \tA_e$ by Lemma \ref{boundingbox}, $\tcE_f \subseteq \F_{f,e}$ for each $f \in \tA_e$.
\end{proof}
\begin{twointersectionpoints2}\label{twointersectionpoints2}
  Suppose $\beta \ne 0$, $\left|\sigma^{(1)}\right| \ne \left|\sigma^{(2)}\right|$, $\vplusminusa \ne 0$, and $\eta^{(\alpha)} \ne 0$ for $\alpha = 1, 2$.  Then for each $f \in \widetilde{\A}_e$ the set $\partial \tcE^{(1)}_f \cap \partial \tcE^{(2)}_f$ contains at most 2 points.
\end{twointersectionpoints2}
\begin{proof}
  The proof is a word-for-word repeat of the proof of Lemma \ref{twointersectionpoints} applied to $\tpa_f$.
\end{proof}
Therefore we can numerically search for tighter bounds as follows.  For each $f \in \widetilde{\A}_e$, if $\widetilde{\Delta}_f \ge 0$ then $f \in \tA$ (where $\widetilde{\Delta}_f$ is the same as $\Delta_f$ but with $\al_6$ replaced by $\widetilde{a}^{(\alpha)}_6 := \al_6 - \tau^{(\alpha)}_f$).  If $\widetilde{\Delta}_f < 0$, then $f \notin \tA$ if and only if $\widetilde{p}^{(1)}_f(\mathbf{r}^{(2)}) < 0 $ and $\widetilde{p}^{(2)}(\mathbf{r}^{(1)}) < 0$, where $\mathbf{r}^{(1)}$ and $\mathbf{r}^{(2)}$ are defined in \eqref{center}.
We have thus proven the following theorem.
\begin{Rperptheorem}\label{Rperptheorem}
  Suppose $\beta \ne 0$, $\left|\sigma^{(1)}\right| \ne \left|\sigma^{(2)}\right|$, $\vplusminusa \ne 0$, and $\eta^{(\alpha)} \ne 0$ for $\alpha = 1, 2$. Then for $f \in \widetilde{\A}_e \, (= [\widetilde{f}_{e,l}, \widetilde{f}_{e,u}])$, if $\widetilde{\Delta}_f \ge 0$ then $f \in \tA$ where $\widetilde{\Delta}_f$ is defined in \eqref{Delta} by replacing $\al_6$ by $\widetilde{a}^{(\alpha)}_6 = \al_6 - \tau^{(\alpha)}_f$.  If $\widetilde{\Delta}_f < 0$, then $f \notin \tA$ if and only if $\widetilde{p}^{(1)}_f(\mathbf{r}^{(2)}) < 0$ and $\widetilde{p}^{(2)}_f\left(\mathbf{r}^{(1)}\right) < 0$.
\end{Rperptheorem}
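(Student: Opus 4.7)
The plan is to mirror the proof of Theorem \ref{noRperptheorem} with $\pa_f$ replaced throughout by $\tpa_f = \pa_f - \tau^{(\alpha)}_f$. The key algebraic observation is that only the constant term $\al_6$ is shifted, to $\widetilde{a}_6^{(\alpha)} = \al_6 - \tau^{(\alpha)}_f$; the quadratic and linear coefficients $\al_1,\ldots,\al_5$ are unchanged. Geometrically, the elliptic disks $\Ea_f$ of Section \ref{section_more_sophisticated_bounds} shrink to $\tEa_f$ but retain the same axes, orientations, and centers $\mathbf{r}^{(\alpha)}$ given by \eqref{center}.

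First I would reduce $f \in \tA$ to nonemptiness of $\tcE_f$. By Lemma \ref{eigenvaluelemma}, $\Ma_f(x,y)$ is positive-semidefinite precisely when $\tpa_f(x,y) \ge 0$, so $\tFa_f = \tEa_f$ and $\tF_f = \tcE_f$. By Lemma \ref{ellipseslemma2}, for $f \in \widetilde{\A}_e$ each $\tEa_f$ is either a closed elliptic disk or (at the endpoints) a single point, and by Lemma \ref{boundingbox2} both are contained in $\F_{f,e}$.

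Next I would transcribe the common-chord computation from Lemma \ref{twointersectionpoints2}. Forming $|\sigma^{(1)}|^2 \widetilde{p}_f^{(1)}(x,y) - |\sigma^{(2)}|^2 \widetilde{p}_f^{(2)}(x,y)$ eliminates the quadratic terms (exactly as in \eqref{mueq}, since $\al_1,\al_2,\al_3$ are unchanged) and yields a linear equation $\mu_4 x + \mu_5 y + \widetilde{\mu}_6 = 0$, where $\mu_4, \mu_5$ coincide with those of Lemma \ref{twointersectionpoints} and only the constant term is modified. Solving for $y$ and substituting into $\widetilde{p}_f^{(1)}(x,y) = 0$ gives a quadratic in $x$ whose coefficients $\nu_1$ and $\nu_2$ agree with those in \eqref{nus} (they involve only $\aone_1,\ldots,\aone_5$ and $\mu_4,\mu_5$), while $\nu_3$ is replaced by a $\widetilde{\nu}_3$ obtained from \eqref{nus} by the substitution $\aone_6 \mapsto \widetilde{a}_6^{(1)}$ and $\mu_6 \mapsto \widetilde{\mu}_6$. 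Its discriminant is precisely $\widetilde{\Delta}_f$, and so $\partial\tEo_f \cap \partial\tEt_f$ again consists of at most two points. If $\widetilde{\Delta}_f \ge 0$, the boundary ellipses intersect, so $\tcE_f \ne \emptyset$ and $f \in \tA$.

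Finally, if $\widetilde{\Delta}_f < 0$, the boundary ellipses are disjoint and the four-case analysis following Lemma \ref{twointersectionpoints} applies verbatim with $\pa_f$ replaced by $\tpa_f$: each elliptic disk either fails to contain the center of the other (disjoint disks, $\tcE_f = \emptyset$) or contains it (one disk nested in the other, $\tcE_f \ne \emptyset$). The containment test reduces to the sign of $\tpa_f$ evaluated at the other disk's center, giving $\tcE_f = \emptyset$ iff $\widetilde{p}_f^{(1)}(\mathbf{r}^{(2)}) < 0$ and $\widetilde{p}_f^{(2)}(\mathbf{r}^{(1)}) < 0$. The main obstacle is administrative: verifying that the substitution $\al_6 \mapsto \widetilde{a}_6^{(\alpha)}$ really is the only change required in the expressions for $\nu_1, \nu_2, \nu_3$ and in the common-chord coefficients, and that the endpoint degeneracies at $f = \widetilde{f}_{e,l}$ and $f = \widetilde{f}_{e,u}$ (where one disk collapses to a point, per Lemma \ref{ellipseslemma2}) still fit the same dichotomy. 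Both are immediate from inspection of \eqref{aones}, \eqref{atwos}, and \eqref{tau}.
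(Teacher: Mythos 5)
Your proposal is correct and follows essentially the same route as the paper: the authors likewise prove Theorem \ref{Rperptheorem} by extending Lemmas \ref{ellipseslemma}--\ref{twointersectionpoints} to $\tpa_f$ (as Lemmas \ref{ellipseslemma2}--\ref{twointersectionpoints2}), observing that only the constant term $\al_6$ is replaced by $\widetilde{a}^{(\alpha)}_6 = \al_6 - \tau^{(\alpha)}_f$, and then repeating the discriminant and center-containment dichotomy verbatim. Your additional remarks on the unchanged centers $\mathbf{r}^{(\alpha)}$ and the endpoint degeneracies are consistent with the paper's treatment.
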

The numerically computed bounds may or may not be tighter than the improved elementary bounds, depending on the problem under consideration--see the last paragraph in Section \ref{section_elementary_bounds}.  If we consider concentric disks in which the inner disk is labeled as phase 1, then the improved elementary lower bound will be exactly equal to the volume fraction, i.e. $\widetilde{f}_{e,l} = \fone$.  In this case the field inside the inner disk is constant, so $\vplusone$ and $\vminusone$ are both constants as well.  This example is somewhat trivial in the sense that the original elementary lower bound is also equal to the volume fraction, i.e. $f_{e,l} = \fone$ (see the last paragraph in Section \ref{section_elementary_bounds}).  In the case of a two-phase simple laminate we find that $f_{e,l} = \widetilde{f}_{e,l} = \widetilde{f}_{e,u} = f_{e,u} = \fone$ since the electric field is constant in both phases.  In Section \ref{subsection:Attainability_Conditions} we gave an example of a geometry in which the improved elementary lower bound $\widetilde{f}_{e,l}$ is equal to the true volume fraction $f^{(1)}$ but the elementary lower bound $f_{e,l}$ is strictly less than the volume fraction. 

In Figures \ref{tilde_ellipse_static_2_improved_elementary_lower_bound}-\ref{tilde_ellipse_static_2_improved_elementary_upper_bound} we plot the sets $\tcE_f^{(1)}$ (red) and $\tcE_f^{(2)}$ (blue) at various values of $f \in \tA_e = [\widetilde{f}_{e,l}, \widetilde{f}_{e,u}]$; the centers of each ellipse are indicated by a dot while the black box is the boundary of the set $\F_{f,e}$ (see Definition \ref{elementaryfeasibleregion}).  For comparison we plot $\cE_f^{(1)}$ (red dashed ellipse) and $\cE_f^{(2)}$ (blue dashed ellipse).  Note that $\cE_f\ne\emptyset$ in Figures \ref{tilde_ellipse_static_2_improved_elementary_lower_bound}-\ref{tilde_ellipse_static_2_improved_elementary_upper_bound} but $\tcE_f \ne \emptyset$ only in Figures \ref{tilde_ellipse_static_2_firsttildedisczero}-\ref{tilde_ellipse_static_2_secondtildedisczero}.  
In Figure \ref{discriminant_tilde} we plot $\widetilde{\Delta}_f$ (solid black line), $\widetilde{p}_f^{(1)}(\mathbf{r}^{(2)})$ (red dashed line), and $\widetilde{p}_f^{(2)}(\mathbf{r}^{(1)})$ (blue dash-dotted line) over the interval $\tA_e$.  The true volume fraction is represented by the magenta dashed line and the horizontal gray line represents the $f-$axis.  In addition, the set $\tA$ is indicated by the darkened interval.  In this case $\tA \subset \tA_e$ (which is in contrast to the example in Figure \ref{ellipses_fig} where $\A = \A_e$)--since $\widetilde{p}^{(1)}(\mathbf{r}^{(2)})$ and $\widetilde{p}^{(2)}(\mathbf{r}^{(1)})$ are both negative for all $f \in \tA_e$, the set $\tA$ is simply the set on which $\widetilde{\Delta}_f \ge 0$.
\begin{figure}[!hbtp]
  \centering
  \begin{subfigure}{0.2\textwidth}
    \centering
    \includegraphics[scale=0.5]{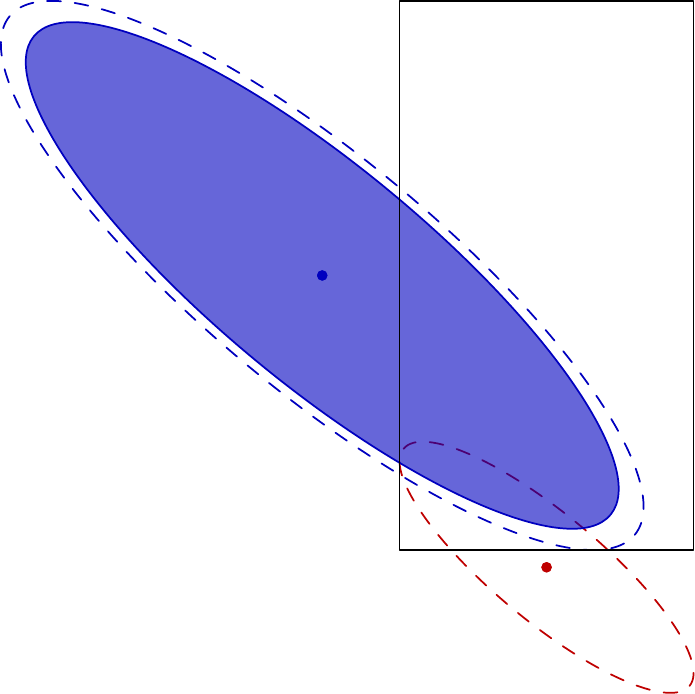}
    \caption{}
    \label{tilde_ellipse_static_2_improved_elementary_lower_bound}
  \end{subfigure}
  \qquad
  \begin{subfigure}{0.2\textwidth}
    \centering
    \includegraphics[scale=0.5]{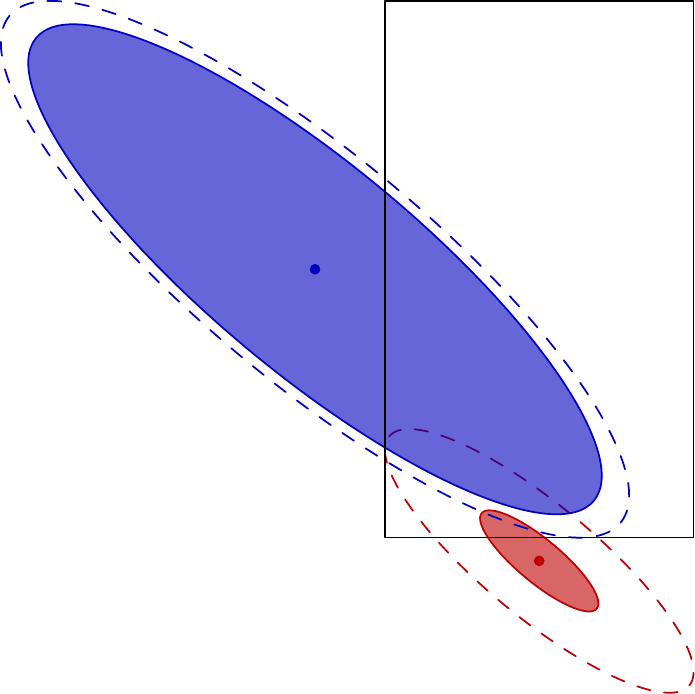}
    \caption{}
    \label{tilde_ellipse_static_2_average_improved_elementary_lower_bound_firsttildedisczero}
  \end{subfigure}
  \qquad
  \begin{subfigure}{0.2\textwidth}
    \centering
    \includegraphics[scale=0.5]{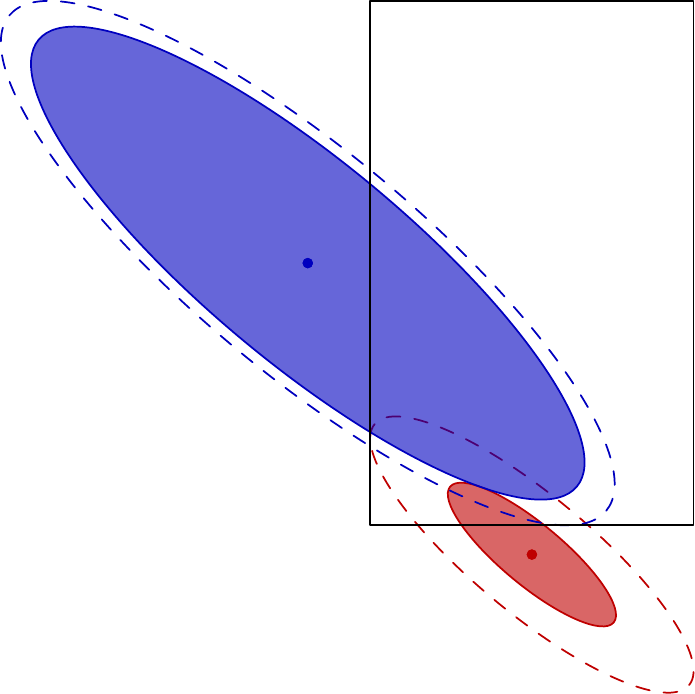}
    \caption{}
    \label{tilde_ellipse_static_2_firsttildedisczero}
  \end{subfigure}
  \qquad
  \begin{subfigure}{0.2\textwidth}
    \centering
    \includegraphics[scale=0.5]{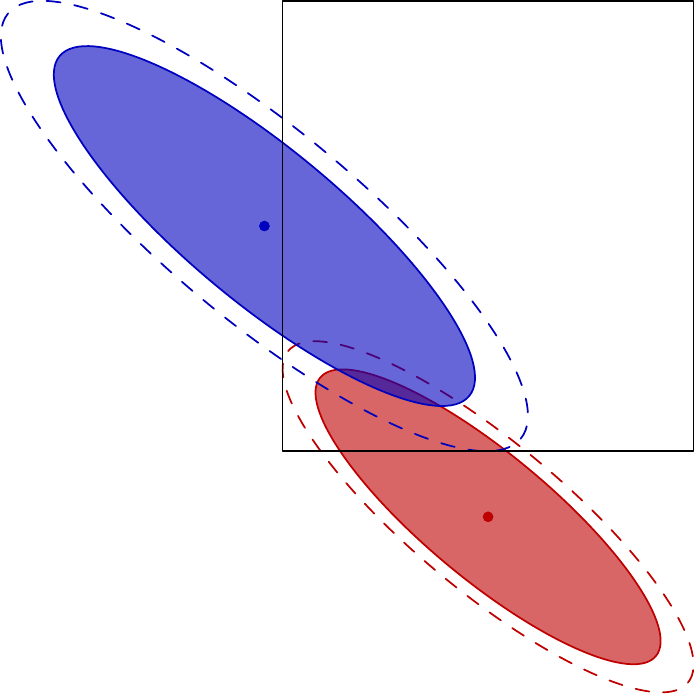}
    \caption{}
    \label{tilde_ellipse_static_2_volume_fraction}
  \end{subfigure}  
  \\[1cm]
  \begin{subfigure}{0.2\textwidth}
    \centering
    \includegraphics[scale=0.5]{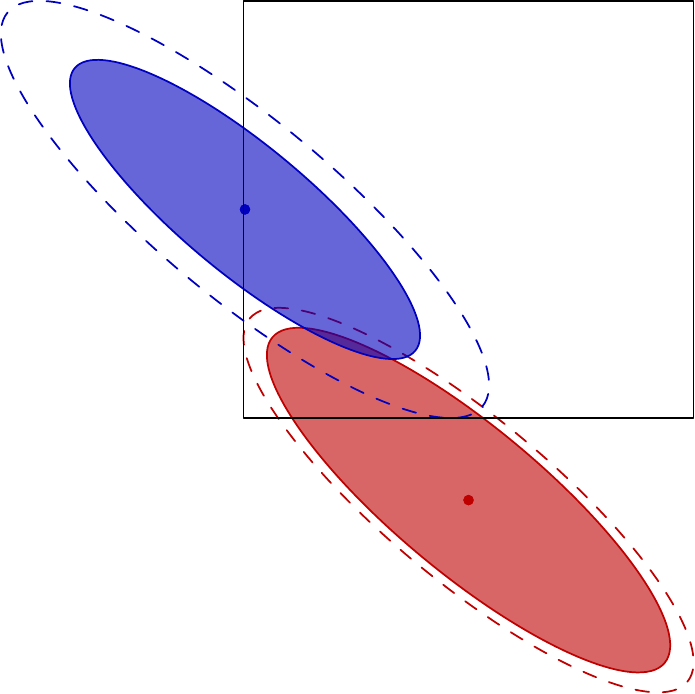}
    \caption{}
    \label{tilde_ellipse_static_2_average_volume_fraction_secondtildedisczero}
  \end{subfigure}
  \qquad
  \begin{subfigure}{0.2\textwidth}
    \centering
    \includegraphics[scale=0.5]{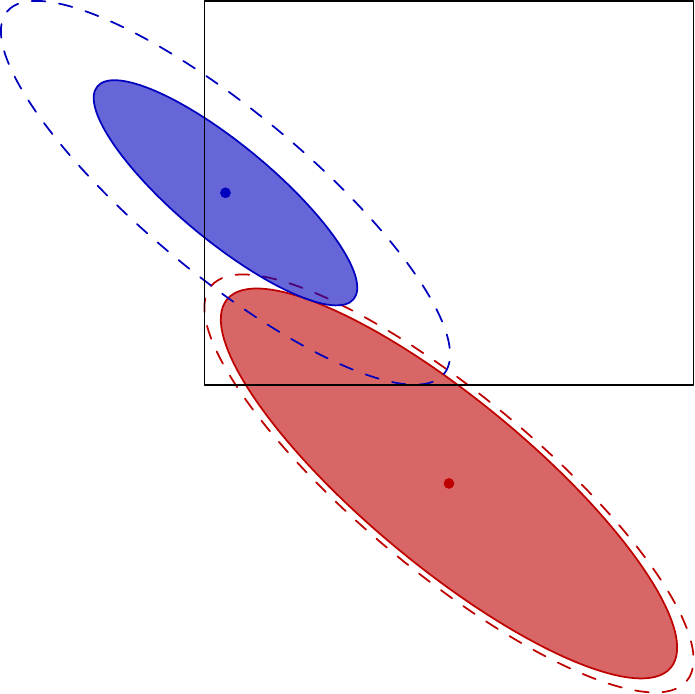}
    \caption{}
    \label{tilde_ellipse_static_2_secondtildedisczero}
  \end{subfigure}
  \qquad
  \begin{subfigure}{0.2\textwidth}
    \centering
    \includegraphics[scale=0.5]{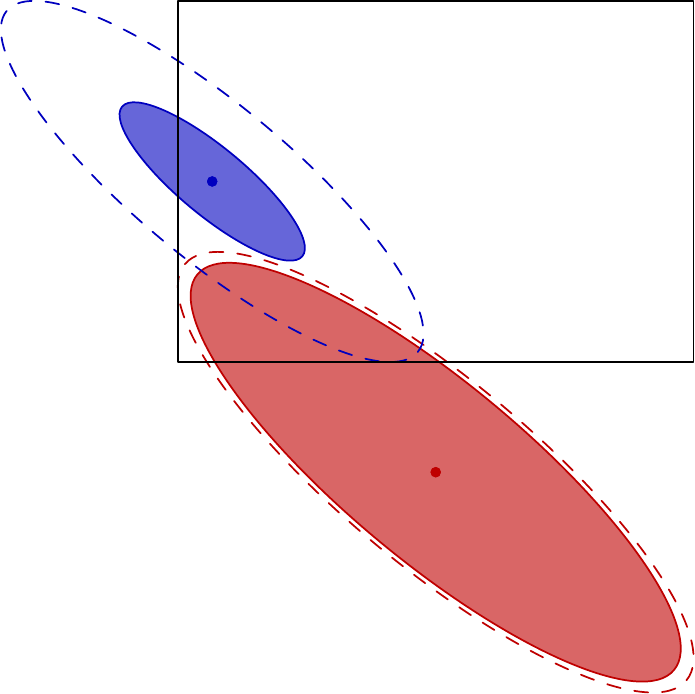}
    \caption{}
    \label{tilde_ellipse_static_2_averaga_improved_elementary_upper_bound_secondtildedisczero}
  \end{subfigure}
  \qquad
  \begin{subfigure}{0.2\textwidth}
    \centering
    \includegraphics[scale=0.5]{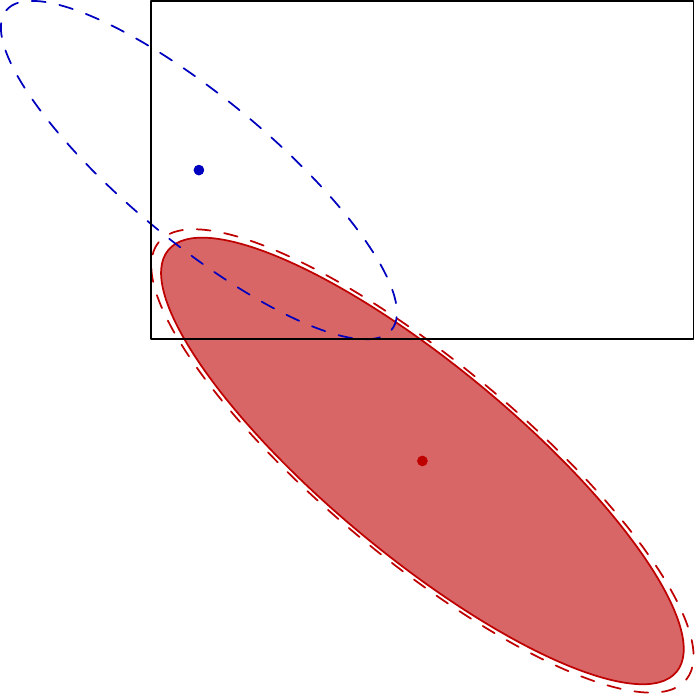}
    \caption{}
    \label{tilde_ellipse_static_2_improved_elementary_upper_bound}
  \end{subfigure}  
  \\[1cm]
  \begin{subfigure}{\textwidth}
    \centering
    \includegraphics{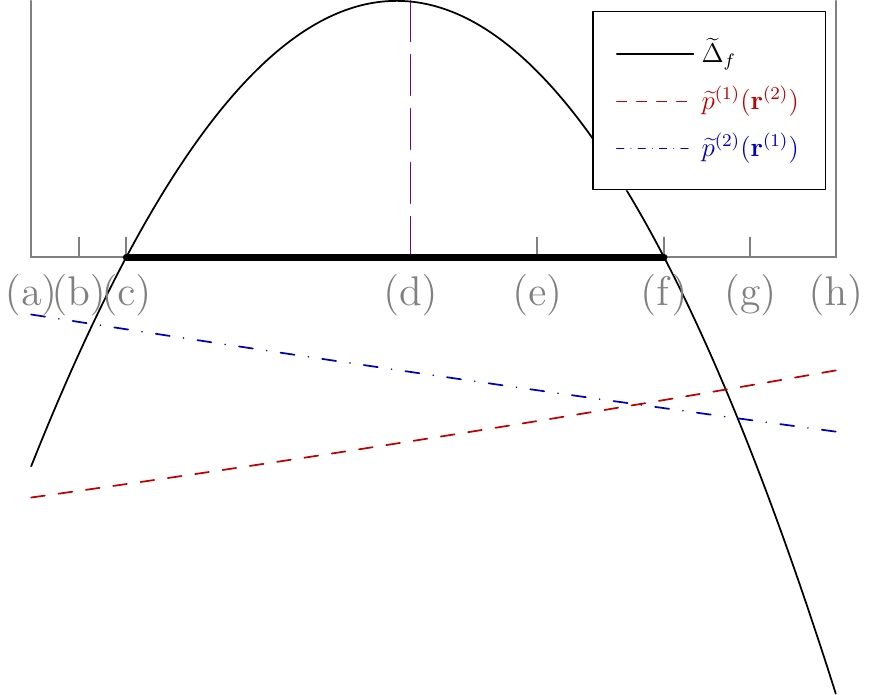}
    \caption{}
    \label{discriminant_tilde}
  \end{subfigure}  
  \caption{\emph{The rectangle $\F_{f,e}$ (outlined in black) and the sets $\tcE_f^{(1)}$ (red) and $\tcE_f^{(2)}$ (blue) at test volume fractions (a) $f=\widetilde{f}_{e,l}\approx 0.7982$; (b) $f \approx 0.7984$; (c) $f = \approx 0.7987$ (where $\widetilde{\Delta}_f = 0$);  (d) $f = f^{(1)} = 0.80$; (e) $f \approx 0.8006$; (f) $f\approx 0.8012$ (where $\widetilde{\Delta}_f = 0$); (g) $f \approx 0.8016$; (h) $f = \widetilde{f}_{e,u} \approx 0.8020$.  The red (blue) dashed ellipse is the boundary of $\cE^{(1)}_f$ ($\cE^{(2)}_f$).  (i) A plot of $\widetilde{\Delta}_f$ (black solid line), $\frac{1}{5}\widetilde{p}_f^{(1)}(\mathbf{r}^{(2)})$ (red dashed line), and $\frac{1}{5}\widetilde{p}_f^{(2)}(\mathbf{r}^{(1)})$ (blue dash-dotted line) for $f \in \tA_e = [\widetilde{f}_{e,l},\widetilde{f}_{e,u}]$.  The parameters used to create this figure are the same as those in Figure \ref{elementaryboundsfigure}.  So we obtain the bounds $0.7987 \le f^{(1)} \le 0.8012$, which are better than the improved elementary bounds from Section \ref{section_improved_elementary_bounds} and Figure \ref{parabolas}.}}  \label{tildeellipses_fig}
\end{figure}

To search for geometries for which these more sophisticated bounds are attained one could look for geometries such that for some choice of real vectors $\mathbf{c}^{(1)}, \mathbf{d}^{(1)}$ not both zero and $\mathbf{c}^{(2)}, \mathbf{d}^{(2)}$ not both zero
\begin{equation}\label{eq:h1h2zero}
  \begin{cases} \mathbf{h}^{(1)}(\x; \mathbf{c}^{(1)}, \mathbf{d}^{(1)}) \equiv 0 & \text{for } \x \in \text{phase } 1 \\
  			      \mathbf{h}^{(2)}(\x; \mathbf{c}^{(2)}, \mathbf{d}^{(2)}) \equiv 0 & \text{for } \x \in \text{phase } 2.
  \end{cases}
\end{equation}
In this case $\widetilde{p}_f^{(1)}$ and $\widetilde{p}_f^{(2)}$ will both be zero and $(x,y)$ must be at an intersection point of the boundary of the elliptic disk $\tcE_f^{(1)}$ and the boundary of the elliptic disk $\tcE_f^{(2)}$.  Conversely if $(x,y)$ is at such an intersection point then \eqref{eq:h1h2zero} must hold.  Additionally we require that the two ellipses only touch at one point and the meaning of this condition in terms of fields is not so clear.  Therefore \eqref{eq:h1h2zero} is a necessary, but not sufficient, condition for attainability of the bounds.  A similar remark applies to the attainability of the ``more sophisticated'' bounds derived in Section \ref{section_more_sophisticated_bounds}.

\subsection{Degenerate Cases}
In this section we briefly discuss the degenerate cases.
If $\vplusone$ or $\vminusone \equiv 0$ ($\vplustwo$ or $\vminustwo \equiv 0$), then $\widetilde{p}^{(1)}_{f,\max} \equiv 0$ ($\widetilde{p}^{(2)}_{f,\max} \equiv 0$) for all $f \in \A_e$ by \eqref{ptildemax}, so we are unable to derive a tighter lower (upper) elementary bound.  If $\vplusminusa = 0$ for $\alpha = 1,2$ we again have $\widetilde{A}_e = \A_e$.  In summary we construct the following table for the restricted elementary set of admissible volume fractions, $\tA_e$, assuming $\eta^{(\alpha)} \ne 0$ for $\alpha = 1,2$.  As the table shows, if $\vplusminusa = 0$ we have $\widetilde{\A}_e = \A_e$ (which is given in \eqref{Ae}).  One can apply the procedure discussed in the paragraphs preceding Theorem \ref{Rperptheorem} to try to improve these elementary bounds.
\begin{center}
{\renewcommand{\arraystretch}{1.5}
\renewcommand{\tabcolsep}{0.2cm}
\begin{tabular}[c]{|c|c|c|c|c|}
\hline
 \begin{tabular}{@{}c@{}} $\vplusminusone \not\equiv 0$ \\ and $\vplusminustwo \not\equiv 0 $\end{tabular} & \begin{tabular}{@{}c@{}} $\vplusone$ or $\vminusone \equiv 0$ \\ and $\vplusminustwo \not\equiv 0 $\end{tabular} & \begin{tabular}{@{}c@{}} $\vplustwo$ or $\vminustwo \equiv 0$ \\ and $\vplusminusone \not\equiv 0 $\end{tabular}  & \begin{tabular}{@{}c@{}} $\vplusminusone \equiv 0$ \\ and $\vplusminustwo \equiv 0 $\end{tabular} \\
\hline
$[\widetilde{f}_{e,l},\widetilde{f}_{e,u}]$ & $[f_{e,l}, \widetilde{f}_{e,u}]$ & $[\widetilde{f}_{e,l}, f_{e,u}]$ & $[f_{e,l}, f_{e,u}] = \A_e$\\
\hline
\end{tabular}}
\end{center}


\section{Numerical Example}\label{section_numerical_example}

In this section we present the results of several numerical experiments.  We used the two dimensional configuration and boundary conditions from Figure \ref{elementaryboundsfigure} to create the plots in Figure \ref{numerical_figure}.  In each subplot $\sigma^{(1)}$ is fixed and $\sigma^{(2)} = 1$; we varied the volume fraction by fixing $R_1 = 0.45$ and $R_3 = 5$ while varying $R_2$ between approximately $0.6727$ and $4.995$.  

Each subplot contains the following data scaled by $f^{(1)}$: $f_{e,l}$ (red stars); $\inf \A$ (red circles); $\widetilde{f}_{e,l}$ (red crosses); $\inf \tA$ (red squares); $f_{e,u}$ (blue stars); $\sup \A$ (blue circles); $\widetilde{f}_{e,u}$ (blue crosses); $\sup \tA$ (blue squares).  In all of the plots, $f_{e,l}/f^{(1)} = \inf \A/f^{(1)}$ and $f_{e,u}/f^{(1)} = \sup \A/f^{(1)}$, so the bounds obtained by using the elliptic disks $\cE_f^{(1)}$ and $\cE_f^{(2)}$ from Section \ref{section_more_sophisticated_bounds} (namely $\inf \A$ and $\sup \A$) are simply the elementary bounds $f_{e,l}$ and $f_{e,u}$ from Section \ref{section_elementary_bounds}.  

For many cases in this 2-D example the bounds obtained by using the elliptic disks $\tcE^{(1)}_f$ and $\tcE^{(2)}_f$ from Section \ref{section_tilde_ellipse_bounds} (namely $\inf \tA$ and $\sup \tA$) are substantially better than the improved elementary bounds $\widetilde{f}_{e,l}$ and $\widetilde{f}_{e,u}$ from Section \ref{section_improved_elementary_bounds}. In particular, the extra information from the elliptic disks $\tcE^{(1)}$ and $\tcE^{(2)}$ gives us lower bounds that, most of the time, are better than the improved elementary bounds $\widetilde{f}_{e,l}$ and $\widetilde{f}_{e,u}$; this extra information does not seem to improve the upper bound in most cases, however.

\begin{figure}[!hbtp]
  \centering
  \begin{subfigure}{0.45\textwidth}
    \centering
    \includegraphics[scale=0.8]{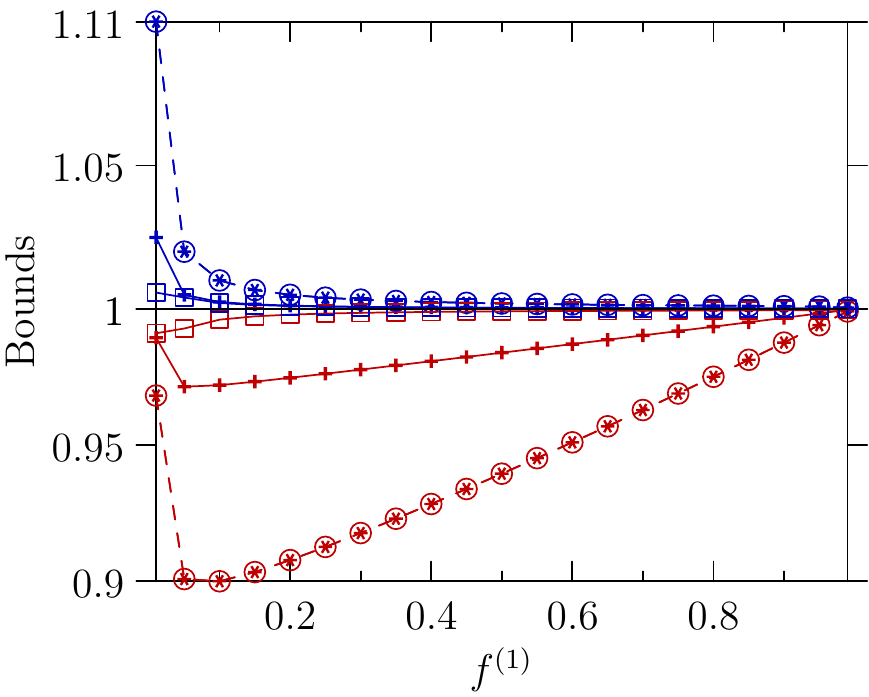}
    \caption{}
    \label{twopluspointfivei}
  \end{subfigure}
  \quad
  \begin{subfigure}{0.45\textwidth}
    \centering
    \includegraphics[scale=0.8]{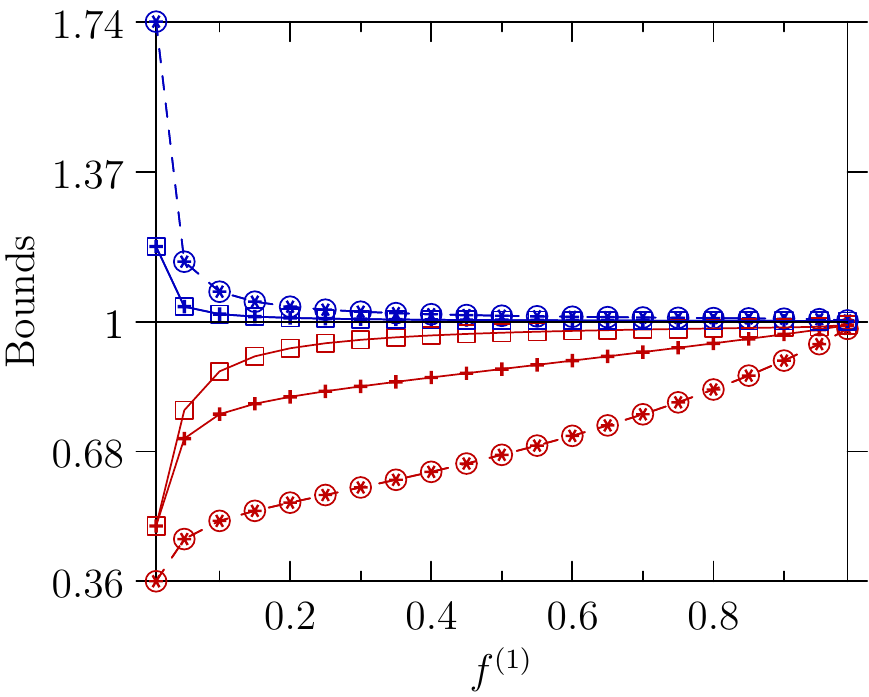}
    \caption{}
    \label{twoplusteni}
  \end{subfigure}
  \\[0.5cm]
  \begin{subfigure}{0.45\textwidth}
    \centering
    \includegraphics[scale=0.8]{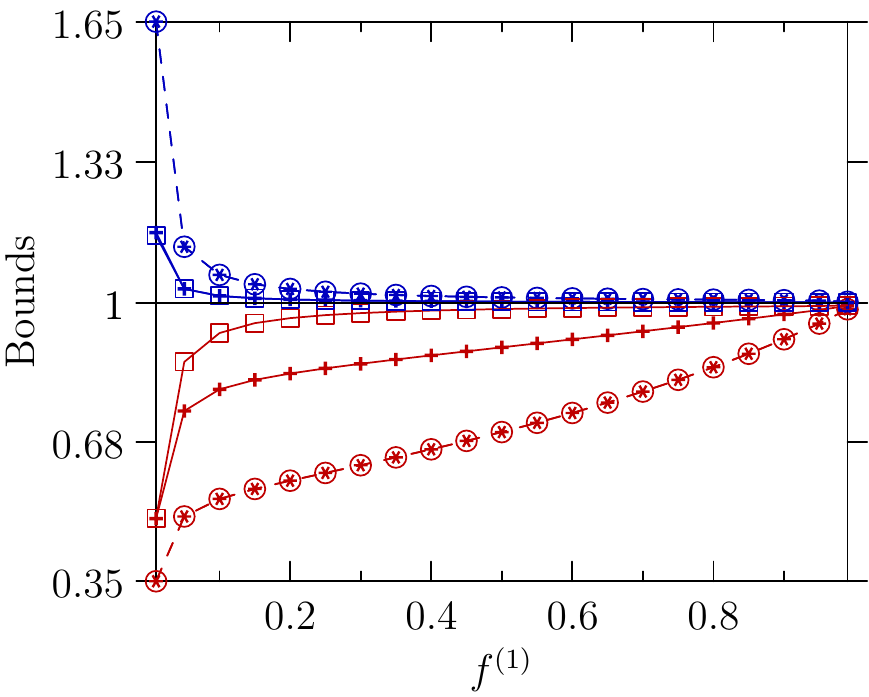}
    \caption{}
    \label{tenplusteni}
  \end{subfigure}
  \quad
  \begin{subfigure}{0.45\textwidth}
    \centering
    \includegraphics[scale=0.8]{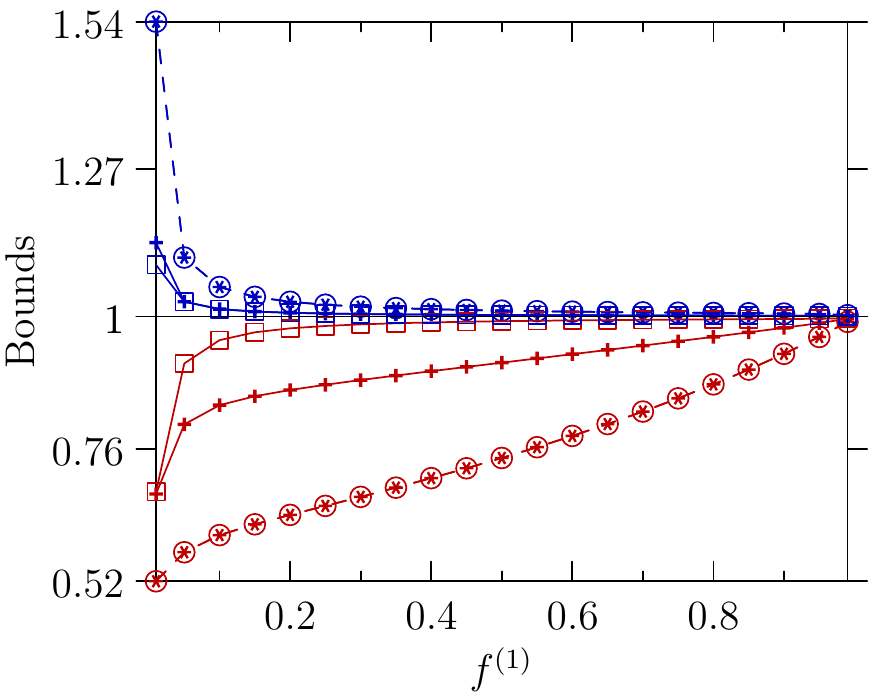}
    \caption{}
    \label{tenpluspointfivei}
  \end{subfigure}
  \\[0.5cm]
  \begin{subfigure}{\textwidth}
    \centering
    \includegraphics[scale=1.25]{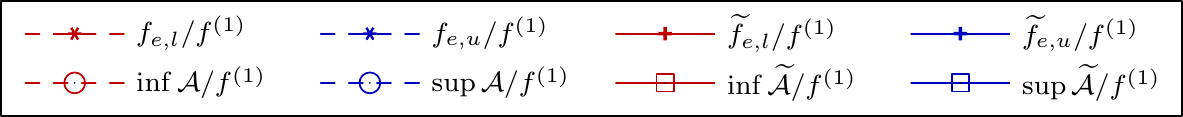}
  \end{subfigure}
  \caption{\emph{A plot of the bounds in the case of an annulus (see Figure \ref{annulus_fig}) for several volume fractions ranging from $f^{(1)} = 0.01$ to $f^{(1)} = 0.99$.  In each subfigure the conductivity of the annular ring is $\sigma^{(2)} = 1$ while the conductivity of the surrounding medium in each subfigure is: (a) $\sigma^{(1)} = 2+0.5\ii$; (b) $\sigma^{(1)} = 2+10\ii$; (c) $\sigma^{(1)} = 10+10\ii$; (d) $\sigma^{(1)} = 10+0.5\ii$.  The legend at the bottom indicates the symbol used to represent each bound; in particular we used the following labels: red circles--elementary lower bound ($f_{e,l}$--see Section \ref{section_elementary_bounds}); red stars--``sophisticated'' lower bound (see  Section \ref{section_more_sophisticated_bounds}); red crosses--improved elementary lower bound ($\widetilde{f}_{e,l}$--see Section \ref{section_improved_elementary_bounds}); red squares--improved ``sophisticated'' lower bound (see Section \ref{section_tilde_ellipse_bounds}); blue circles--elementary upper bound ($f_{e,u}$--see Section \ref{section_elementary_bounds}); blue stars--``sophisticated'' upper bound (see  Section \ref{section_more_sophisticated_bounds}); blue crosses--improved elementary upper bound ($\widetilde{f}_{e,u}$--see Section \ref{section_improved_elementary_bounds}); blue squares--improved ``sophisticated'' upper bound (see Section \ref{section_tilde_ellipse_bounds}).}}  \label{numerical_figure}
\end{figure}

\section*{Acknowledgements} Graeme Milton especially wishes to thank George Papanicolaou for help and encouragement and insightful suggestions at many times during his career and for inspiring him to write the book \emph{The Theory of Composites}.  Both authors are thankful for support from the National Science Foundation through grants DMS-0707978 and DMS-1211359 and are grateful to Hyeonbae Kang for stimulating conversations.


\newpage

\bibliographystyle{plain}
\bibliography{volume_fraction_bounds_most_recent}

\end{document}